\newcommand{\gtw}{G_{12}} 
\newcommand{\h}{\mathfrak{h}}
\newcommand{\rp}[2]{\mathbf{#1_{#2}}}
\newcommand{\M}[2]{M_{#1}(#2)} 
\newcommand{\LL}[2]{L_{#1}(#2)}
\newcommand{\mb}[2]{[M_{#1}(#2)]} 
\newcommand{\lb}[2]{[L_{#1}(#2)]}
\newcommand{\ch}[2]{\chi_{\rp{#1}{#2}}}
\newcommand{\chr}{\operatorname{ch}} 
\newcommand{\Hom}{\operatorname{Hom}}
\newcommand{\GL}[1]{\operatorname{GL}_{#1}}
\newcommand{\mc}{\mathbb{C}} 
\newcommand{\mq}{\mathbb{Q}}
\newcommand{\mz}{\mathbb{Z}}
\newcommand{\mo}{\mathcal{O}}
\newcommand{\vphi}{\varphi}
\newcommand{\rar}{\rightarrow}
\newcommand{\mf}[1]{\mathfrak{#1}}
\newcommand{\It}[1]{\textit{#1}}
\newcommand{\Bf}[1]{\textbf{#1}}
\newtheorem{thm}{Theorem}[section]
\newtheorem{lemma}[thm]{Lemma}
\newtheorem{proposition}[thm]{Proposition}
\newtheorem{cor}[thm]{Corollary}
\theoremstyle{definition}
\newtheorem{definition}[thm]{Definition}
\begin{document}

\title{Category $\mo$ for the Rational Cherednik Algebra associated to the complex reflection group $G_{12}$}

\author{Martina Balagovi\' c \and Christopher Policastro}
\address{Department of Mathematics,  Massachusetts Institute
of Technology, Cambridge, MA 02139, USA }
\email{martinab@math.mit.edu, cpoli@mit.edu}
\maketitle

\begin{abstract}
In this paper, we describe the irreducible representations in category $\mo$ of the rational Cherednik algebra $H_{c}(G_{12},\mathfrak{h})$ associated to the complex reflection group $\gtw$ with reflection representation $\mathfrak{h}$ for an arbitary complex parameter $c$. In particular, we determine the irreducible finite dimensional representations and compute their characters. 
\end{abstract}

\section{Introduction}

 The rational Cherednik algebra $H_{c}(W,\mathfrak{h})$ associated to a complex reflection group $W$ with reflection representation $\h$ is a certain infinite dimensional, non-commutative, associative algebra over $\mc$. It can be viewed as a deformation of the algebra $\mc[W] \ltimes S(\h^{*} \oplus \h)$, depending on finitely many complex parameters $c_{s}$ which correspond to conjugacy classes of reflections in $W$. 
 
Little can be said about arbitrary representations of rational Cherednik algebras. However, one can define  category $\mo$ for  $H_{c}(W,\h)$ (sometimes called $\mo_{c}$ or $\mo_{c}(W,\mathfrak{h})$)% \cite{GGOR}
, which contains better understood representations. It can be shown that category $\mo$ is generated by certain standard (or Verma) modules $M_{c}(\tau)$, which are parametrized by irreducible representations $\tau$ of the finite group $W$. There is a certain contravariant form  on highest weight modules in category $\mo$, that we call $B$, and that is an analogue of the Shapovalov form in Lie theory. It has the property that its kernel on $M_{c}(\tau)$ is the maximal proper submodule; hence the quotient $L_{c}(\tau)=M_{c}/\ker B$ is an irreducible representations with an inherited nondegenerate form on them. In fact, this construction gives all the irreducible representations in category $\mo_{c}$. The basic and in general still open question is to determine the structure of these modules, for example by giving their characters, or equivalently by giving their description in the Grothendieck group of $\mo_{c}$ in terms of modules $M_{c}(\tau)$.

%While the representation theory of rational Cherednik algebras of complex reflection groups has been seriously studied in recent years, and results or partial results exist, much remains unknown. For groups of type $A$ this question was considered in [BEG] and for dihedral groups in [CH]; for $G_{4}$, in the Shephard-Todd notation [ST], it was studied in [S].       

In this paper we study the rational Cherednik algebra associated to a particular complex reflection group, $G_{12}$ in Shephard-Todd notation% \cite{ST}
. As $G_{12}$ has just one conjugacy class of reflections, $c$ is in our case a complex constant. We describe the irreducible representations in $\mo_{c}(G_{12},\mathfrak{h})$. More precisely, we determine the values of $c$ for which the category is semisimple, find Grothendieck group expressions for irreducible modules in terms of standard modules, and the characters for each irreducible finite dimensional representation. The main theorem of the paper is Theorem \ref{main}.

The structure of the proof is as follows. It is known that there exists a functor $\text{KZ}_{c}$ from $\mo_c$ to the category of representations of a Hecke algebra $\mathcal{H}_{q}(W)$ associated to the complex reflection group $W$. We first determine semisimplicity conditions on $\mathcal{H}_{q}(W)$ in terms of the parameter $q$, and then use the $\text{KZ}_{c}$ functor to determine the semisimplicity conditions on $\mo_{c}$ in terms of the parameter $c$. It is a well known fact that this can always be done, and we do it for $G_{12}$ using the CHEVIE package of the computer algebra software GAP. We find that $\mo_{c}(G_{12},\mathfrak{h})$ is semisimple unless $c=m/12, \, m\in \mathbb{Z}, \, m \equiv 1,3,4,5,6,7,8,9,11 (\operatorname{mod} 12)$ (see Theorem \ref{semisimple}).

Next, there is a simple equivalence of categories $\mo_{c}\to \mo_{-c}$, enabling us to assume $c>0$. Also well known but more involved are the equivalences of categories  $\mo_{1/d}\to \mo_{r/d}$ for $r,d>0,\, d\ne 2$, and $r$ and $d$ relatively prime, and $\mo_{c}\to \mo_{c+1}$. These equivalences $\Phi_{c,c'}:\mo_{c}\to \mo_{c'}$ map standard modules to standard modules and irreducible modules to irreducible modules. It is explained in \cite{GG} how to calculate the permutation $\varphi_{c,c'}$ of the irreducible representations of $W$ that realizes it, in the sense that $\Phi_{c,c'}(M_{c}(\tau))=M_{c'}(\varphi_{c,c'}(\tau))$ and $\Phi_{c,c'}(L_{c}(\tau))=L_{c'}(\varphi_{c,c'}(\tau))$. We calculate this permutation in case $W=G_{12}$. As a consequence of this, it is enough to describe the irreducible representations $L_{c}(\tau)$ in terms of standard representations $M_{c}(\tau)$ for $c \in \left\{1/12, 1/4, 1/3, 1/2 \right\}$, as we can use the equivalences of categories to get descriptions for all other values of $c$. 

Finally, for $c \in \left\{1/12, 1/4, 1/3, 1/2 \right\}$ we describe the irreducible representations in $\mo_{c}$ case by case, using a variety of methods such as computations of the contravariant form $B$ in the algebra software MAGMA, basic representation theory of the finite group $G_{12}$ and induction and restriction functors for rational Cherednik algebras.

The methods described in this paper should be easily applicable to other complex reflection groups in the case of equal parameters.

The organization of the paper is as follows. In section 2, we give the basic definitions and constructions that will be needed for the statement of results. These include among other things the definition of rational Cherednik algebras, category $\mo$, standard and irreducible modules, characters of representations and Grothendieck group. In section 3, we state the main results: theorems \ref{semisimple} and \ref{main} and corollaries \ref{korolar} and \ref{aspherical}. In section 4 we give the proof of theorem \ref{semisimple} with all the prerequisites for it (facts about Hecke algebras and KZ functors). In section 5 we describe several equivalences of categories between categories $\mo_{c}$ for different $c$ that reduce the proof of theorem \ref{main} to the cases $c \in \left\{1/12, 1/4, 1/3, 1/2 \right\}$. Character formulas that accompany these equivalences allow us to, assuming theorem \ref{main}, give a proof of corollary \ref{aspherical} at the end of this section. Section 6 then describes tools that we are going to use in section 7, and section 7 gives the computations for the remaining values $c \in \left\{1/12, 1/4, 1/3, 1/2 \right\}$, thus finishing the proof of theorem \ref{main}. 
 
\subsection*{Acknowledgements} We are very grateful to Pavel Etingof for suggesting the problem and devoting his time to it through many helpful conversations. We would like to thank Maria Chlouveraki and Gunter Malle for their explanations of cyclotomic Hecke algebras. This work was done with the support of SPUR, MIT's program for undergraduate research, where C.P. was a participant. The work of M.B. was partially supported by the NSF grant DMS-0504847.

\section{Definitions and Notation}

 In this section, we recall the properties of rational Cherednik algebras and their representations necessary for the statement of our results. A more detailed description using the same notation, along with proofs of the basic properties omitted here, can be found in \cite{EM}.  

\subsection{Rational Cherednik Algebra} 
 Let $\h$ denote a finite dimensional complex vector space. We call a nontrivial diagonalizable element of $\operatorname{GL}(\h)$ a \It{reflection} if it fixes a codimension 1 subspace, and a finite subgroup of $\operatorname{GL}(\h)$ a \It{complex reflection group} if it is generated by reflections; in that case $\mathfrak{h}$ is called its \It{reflection representation}. Let $W$ be such a group and $S$ its set of all reflections in $W$. 
 
Let $\mathfrak{h}^*$ denote the dual space to $\mathfrak{h}$, and $(\cdot,\cdot):\mathfrak{h}^*\times \mathfrak{h}\to \mc$, $(\cdot,\cdot):\mathfrak{h}\times \mathfrak{h}^*\to \mc$  the canonical pairings. We recall that $W$ acts naturally on $\h^*$ by the dual representation, and that an element $s\in W$ is a reflection on $\mathfrak{h}$ if and only if it is a reflection on $\mathfrak{h}^*$. For such  an element $s \in S$, let $\lambda_s$ be its nontrivial eigenvalue in $\h^*$, let $\alpha_s \in \h^{*}$ be its eigenvector corresponding to the eigenvalue $\lambda_s$, and let $\alpha^{\vee}_{s} \in \h$ be the eigenvector corresponding to the eigenvalue $\lambda_s^{-1}$. These eigenvectors are unique up to scaling, so we may assume they are mutually normalized so that $(\alpha_s,\alpha^{\vee}_{s})=2$. 

Let  $c: S \to \mc$ be a conjugation invariant map, meaning a map such that $c(wsw^{-1})=c(s)$ for all $s\in S, w\in W$. 

\begin{definition}
The rational Cherednik algebra $H_{c}(W,\h)$ is the quotient of $\mc[W] \ltimes T(\h \oplus \h^{*})$ by the relations $$[x,x']=0,   [y,y']=0,   [y,x]=(y,x) - \sum_{s \in S}{c(s)(y,\alpha_s)(x,\alpha^{\vee}_{s}) s}$$ for all $x,x' \in \h^*$, and $y,y' \in \h$. 
\end{definition}

\subsection{PBW basis} 
Let $\left\{x_1,\ldots,x_n \right\}$ be any basis of $\h^*$ and $\left\{y_1,\ldots,y_n \right\}$ any basis of $\h$. It is clear from the definition that elements of the form $wx_{1}^{p_1} \cdots x_{n}^{p_n}y_{1}^{q_1} \cdots y_{n}^{q_n}$, for $w \in W$, $p_{i},q_{i} \in \mathbb{Z}_{\ge 0}$, span $H_{c}(W,\h)$. It can be shown that they in fact form a basis. In other words, there is an isomorphism of vector spaces $$H_{c}(W,\h) \cong \mc[W] \otimes S\h^{*} \otimes S\h .$$ 

From now on, fix $\left\{y_1,\ldots,y_n \right\}$ a basis of $\h$ and $\left\{x_1,\ldots,x_n \right\}$ a dual basis of $\h^*$.

\subsection{Grading} 
Define a grading on $H_{c}(W,\h)$ by $\operatorname{deg}(w)=0$ for $w \in W$, $\operatorname{deg}(x)=1$ for $x \in \h^{*}$, and $\operatorname{deg}(y)=-1$ for $y \in \h$. This grading is inner, in the sense that there exists a grading element $$\Bf{h}= \sum_{i=1}^{n}{x_{i}y_{i}} + \frac{1}{2}\operatorname{dim}(\h) - \sum_{s \in S}{ \frac{2c(s)}{1-\lambda_{s}} s } \in H_{c}(W,\h)$$ such that $[\Bf{h},x]=x$, $[\Bf{h},y]=-y$, and $[\Bf{h},w]=0$, so the $n$-th graded piece of $H_{c}(W,\h)$ is exactly the $n$-eigenspace of $\Bf{h}$.  

 \subsection{Category $\mo$} The category of representations of $H_{c}(W,\h)$ that we are interested in, called category $\mo$ (or possibly $\mo_{c}$ or $\mo_{c}(W,\mathfrak{h})$ to recall the dependence on $c, W, \mathfrak{h}$), has as objects those $H_{c}(W,\h)$-modules that are finitely generated over $S\h^{*}$ and locally nilpotent over $S\h$. It is a full subcategory of $H_{c}(W,\h)$-$\operatorname{Mod}$, closed under subquotients and extensions.
 
It can be shown that $\mo$ contains all finite dimensional $H_{c}(W,\h)$-modules and that every object in $\mo$ is a direct sum of finite dimensional generalized eigenspaces of $\Bf{h}$. Since $[\Bf{h},w]=0$, each such eigenspace is a finite dimensional representation of $W$.

 \subsection{Standard (Verma) Modules} Let $\tau$ denote an irreducible representation of $W$. Let \linebreak[4] $\mc[W] \ltimes S \h \subset H_{c}(W,\h)$ act on $\tau$ by requiring that each noninvertible element of $S \h$ annihilates $\tau$. The standard module $\M{c}{\tau}$ (sometimes called Verma module) is then defined as the induced representation $$\M{c}{\tau}=H_{c}(W,\h) \otimes_{\mc[W] \ltimes S \h} \tau \; .$$ 

So, as a vector space, $\M{c}{\tau} \cong S\h^{*} \otimes \tau$. It is easy to write the action of $w\in W$, $x\in \h$ and $y\in \h^*$ after this isomorphism. Namely, let  $m_{i}\in \mathbb{Z}_{\ge 0}$, $z\in \tau$, and write the action of $w$ on $x\in \h^*$ as $w.x$. Then the actions are given by 
$$x_{i}=\left(x_{1}^{m_1}\ldots x_{n}^{m_{n}}\otimes z\mapsto x_{1}^{m_1}\ldots x_{i}^{m_i+1}\ldots x_{n}^{m_{n}}\otimes z  \right)$$
$$w=\left(x_{1}^{m_1}\ldots x_{n}^{m_{n}}\otimes z\mapsto (w.x_{1})^{m_1}\ldots (w.x_{n})^{m_{n}}\otimes (w.z)  \right)$$
$$y_{i}=D_{y_{i}}= \partial_{i} \otimes \operatorname{id} - \sum_{s \in S}{\frac{2 c(s)}{1-\lambda_{s}} \frac{(\alpha_{s},y_{i})}{\alpha_{s}}(1-s) \otimes s } .$$

The operators $D_{y_i}$ are called Dunkl operators. 

It is clear that the modules $M_{c}(\tau)$ are in category $\mo$. We can also characterize them by the expected universal mapping property. If $N$ is a $H_{c}(W,\h)$-module which contains a representation of $W$ isomorphic to $\tau$, and such that $S\h$ acts on this representation as zero, then there exists a unique map from $\M{c}{\tau}$ to $N$ that restricts to the identity on $\tau$.    

 \subsection{$\bf{h}$-weights}
 Standard modules are graded representations of $H_{c}(W,\h)$. The graded pieces of $M_{c}(\tau)$ are $S^{i}\h^* \otimes \tau$. It is clear that $x_{i}\in \h^*$ map $S^{i}\h^* \otimes \tau$ to $S^{i+1}\h^* \otimes \tau$, that $y_{i}\in \h$ map $S^{i}\h^* \otimes \tau$ to $S^{i-1}\h^* \otimes \tau$, and that $w\in W$ preserve it. Hence, this gives a grading on $M_{c}(\tau)$ compatible to the one we defined on $H_{c}(W,\h)$.
 
 Because the grading on  $H_{c}(W,\h)$ was inner, it is convenient to label the graded pieces by the eigenvalues of the same element $\bf{h}$. Notice that on $\tau\subset M_{c}(\tau)$ the grading element $\bf{h}$ acts as
 $$h_{c}(\tau):=\frac{1}{2}\operatorname{dim}(\h) - \sum_{s \in S}{ \frac{2c(s)}{1-\lambda_{s}}s},$$
which is a constant because $\tau$ is irreducible and $c$ is conjugation invariant. Then it follows that  the grading element $\bf{h}$ acts on $S^{i}\h^*\otimes \tau$ as $$h_{c}(\tau)+i.$$
Call these eigenvalues $\Bf{h}$-weights, and label $S^{i}\h^*\otimes \tau=M_{c}(\tau)[h_{c}(\tau)+i].$ All the submodules and quotients of $M_{c}(\tau)$ inherit this grading. 

 \subsection{Irreducible Modules in Category $\mo$} Using the grading, one can show that $M_{c}(\tau)$ has a unique maximal proper submodule (which might be $0$). Call it $J_{c}(\tau)$. The quotient module $L_{c}(\tau)=M_{c}(\tau)/J_{c}(\tau)$ is irreducible. It is not hard to see that all the irreducible modules in category $\mo$ are of this form. In particular, there are only finitely many of them, as they are labeled by irreducible representations of $W$. 

Using the grading again and the fact that $\tau$ is irreducible, it is easy to see that standard modules are indecomposable. In particular, category $\mo_{c}$ is semisimple if and only if all standard modules in it are simple, ie. $M_{c}(\tau)=L_{c}(\tau)$ for all $\tau$.

We want to understand these modules: for which $c$ and $\tau$ are they finite dimensional, how fast do their graded pieces grow if they are infinite dimensional, how do they decompose as $W$ representations. These questions received a lot of attention in the last years, and many partial results exist, but the answers are not known in general. This paper focuses on answering them for a particular case $W=G_{12}$.

 \subsection{Characters} Let $N$ be an object of $\mo$. As noted, there is a decomposition $N=\oplus_{\alpha}N[\alpha]$ where $N[\alpha]$ is a generalized eigenspace of the grading element $\Bf{h}$. For a complex variable $t$ and $w \in W$, we define the character of $N$ as the formal series $$\chr_{N}(t,w)=\sum_{\alpha}{\operatorname{Tr}_{N[\alpha]}(w) \cdot t^{\alpha} } \; .$$ Here $\operatorname{Tr}_{N[\alpha]}(w)$ denotes the trace of the operator $w$ on the finite dimensional space $N[\alpha]$. Let the group character of $W$ on $\tau$ be $\chi_{\tau}(w)=\operatorname{Tr}_{\tau}(w)$. For $N=\M{c}{\tau}$ the above defined character is equal to  $$\chr_{\M{c}{\tau}}(t,w)=\frac{\chi_{\tau}(w) \cdot t^{h_{c}(\tau)}}{\operatorname{det}_{\h^{*}}(1-tw)} .$$ 

 \subsection{Grothendieck Group}  Let $K(\mo_{c})$ be the Grothendieck group of $\mo_{c}$, in other words the abelian group with generators $[N], \, N\in \mo_c$, and relations $[N]=[N']+[N'']$ for each short exact sequence $0 \rar N' \rar N \rar N'' \rar 0 $ in $\mo_c$. 
 
It can be shown that modules in $\mo_{c}$ have finite length. As all irreducible modules are of the form $L_{c}(\tau)$, it follows that elements $[L_{c}(\tau)]$ form a $\mathbb{Z}$-basis of $K(\mo_{c})$. In particular, there exist non-negative integers $\hat{n}_{\tau, \sigma}$ such that  $$[M_{c}(\tau)]=\sum_{\sigma}{\hat{n}_{\tau, \sigma}[L_{c}(\sigma)]}.$$
 Because the grading on all modules is given by the action of $\bf{h}$, these constants satisfy: $\hat{n}_{\tau, \tau}=1$, and for $\tau\ne \sigma$, $\hat{n}_{\tau, \sigma}=0$ unless $h_{c}(\sigma)-h_{c}(\tau)\in \mathbb{Z}_{>0}$. As a consequence, in the appropriate ordering, the matrix of these integers is upper triangular with $1$ on the diagonal, and hence invertible. So, there exist $n_{\tau,\sigma}\in \mathbb{Z}$ such that $$[L_{c}(\tau)]=\sum_{\sigma}{n_{\tau, \sigma}[M_{c}(\sigma)]}.$$

The task of this paper is to find $n_{\tau,\sigma}$. This gives a lot of information about $\mo_{c}$. For example, from it we can easily compute the characters of $L_{c}(\tau)$, as $$\chr_{\LL{c}{\tau}}(t,w)=\sum_{\sigma} n_{\tau,\sigma} \frac{\chi_{\sigma}(w)  t^{h_{c}(\sigma)} }{\operatorname{det}_{\h^{*}}(1-tw)} .$$ The numbers $\hat{n}_{\tau,\sigma}$ are easy to compute from  $n_{\tau,\sigma}$, so this gives the irreducible modules that appear in the decomposition series of every $M_{c}(\tau)$ as well. One thing that Grothendieck group expressions, and consequently characters, don't give, is characterization of extensions: namely, they don't differentiate between split and non-split short exact sequences.

 \subsection{The Group $G_{12}$} Our focus will be on the complex reflection group labeled $G_{12}$ in the Shephard-Todd classification \cite{ST}. It is a group of order $48$, given by generators and relations (see \cite{BMR}) as 
 $$\gtw=\left\langle e,f,g \mid e^{2}=f^{2}=g^{2}=1, (efg)^4=(fge)^4=(gef)^4 \right\rangle .$$
Alternatively, it can be realized as $\GL{2}(\mathbb{F}_{3})$ or as a nonsplit central extension of $\mf{S}_{4}$ by $\mz/2\mz$. More precisely, there is a short exact sequence of groups 
$$1\to \mathbb{Z}_{2} \to G_{12} \to \mf{S}_{4} \to 1$$ with the map $\mathbb{Z}_{2} \to G_{12}$ given by $-1\mapsto (efg)^4$ and the map $G_{12}\to  \mf{S}_{4}$ given by $e \mapsto (12), f\mapsto (34), g\mapsto (23)$. Its reflection representation $\mathfrak{h}$ is two dimensional; if $\zeta$ is a primitive eighth root of unity, the reflection action is given by 
$$e\mapsto \frac{1}{2}\left( \begin{array}{cc} \zeta^3 -\zeta & -\zeta^3 +\zeta \\ -\zeta^3 +\zeta & -\zeta^3 +\zeta \end{array} \right) \qquad
f\mapsto \frac{1}{2}\begin{pmatrix} \zeta^3 -\zeta & \zeta^3 -\zeta \\ \zeta^3 -\zeta & -\zeta^3 +\zeta \end{pmatrix}  \qquad
g\mapsto \begin{pmatrix} 0 & -\zeta \\ \zeta^3 & 0 \end{pmatrix}.$$

The representatives of conjugacy classes in $G_{12}$ are $\{id, (efg)^4, e, eg, ef, fg, efg, egf \}$. It contains $12$ reflections, all in the same conjugacy class with a representative $e$. The character table of $G_{12}$ is given in table \ref{groupcharacter}.

\begin{table}[h!]
\begin{center}
\begin{tabular}{| l | *{8}{c} |} 
\hline 
%& 1 & $(pq)^4$ & $p$ & $q^2$ & $(pq)^2$ & $q$ & $pq$ & $(pq)^5$ \\
& $1$ & $(efg)^4$ & $e$ & $eg$ & $ef$ & $fg$ & $efg$ & $egf$ \\
\hline  
size	& 1 & 1 & 12 & 8 & 6 & 8 & 6 & 6 \\ 
order & 1 & 2 & 2 & 3 & 4 & 6 & 8 & 8 \\ 
\hline 
\hline 
$\rp{1}{+}$ & 1 & 1 & 1 & 1 & 1 & 1 & 1 & 1 \\ 
$\rp{1}{-}$ & 1 & 1 & -1 & 1 & 1 & 1 & -1 & -1 \\ 
$\rp{2}{\;}$ & 2 & 2 & 0 & -1 & 2 & -1 & 0 & 0 \\
$\rp{2}{+}$ & 2 & -2 & 0 & -1 & 0 & 1 & $\sqrt{-2}$ & $-\sqrt{-2}$ \\
$\rp{2}{-}$ & 2 & -2 & 0 & -1 & 0 & 1 & $-\sqrt{-2}$ & $\sqrt{-2}$ \\
$\rp{3}{+}$ & 3 & 3 & 1 & 0 & -1 & 0 & -1 & -1 \\
$\rp{3}{-}$ & 3 & 3 & -1 & 0 & -1 & 0 & 1 & 1 \\
$\rp{4}{\;}$ & 4 & -4 & 0 & 1 & 0 & -1 & 0 & 0 \\
\hline 
\end{tabular}
\end{center}
\caption{Character table of $\gtw$}\label{groupcharacter} 
\end{table}

As stated in the character table, $G_{12}$ has two one-dimensional representations: the trivial one we call $\bf{1}_{+}$ and the signum one we call $\bf{1}_{-}$. The reflection representation is written as $\mathfrak{h}\cong\bf{2}_{+}$ and its dual is $\mathfrak{h}^*\cong \bf{2}_{-}$. Projection to  $\mf{S}_4$ gives, in addition to both one dimensional representations, another three representations $\bf{2}$, $\bf{3}_{+}$ and $\bf{3}_{-}$. Finally, there is a four dimensional representation that can be realized as $\bf{4}\cong \bf{2} \otimes \bf{2}_{+}\cong \bf{2} \otimes \bf{2}_{-}$. It should be noted that the names of the representations are chosen to encode the dimension and the result of tensoring with the signum representation; for example, $\bf{2}_{-}\otimes \bf{1}_{-}\cong \bf{2}_{+}$ and $\bf{2}\otimes \bf{1}_{-}\cong \bf{2}$.

\section{Results}

In this section we list the main results, which describe the structure of category $\mo_{c}$ for an arbitrary complex parameter $c$.  

\begin{thm} If $c$ is not of the form $c=m/12, \, m \in \mathbb{Z},\,  m \equiv 1,3,4,5,6,7,8,9,11 (\operatorname{mod} 12)$, then $\mo_{c}(G_{12},\mathfrak{h})$ is semisimple and $M_{c}(\tau)=L_{c}(\tau)$ for all $\tau$.\label{semisimple}
\end{thm}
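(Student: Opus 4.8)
The plan is to reduce semisimplicity of $\mo_c(G_{12},\h)$ to a question about the Hecke algebra $\mathcal{H}_q(G_{12})$ via the KZ functor, and to resolve that question by explicit computation. Recall that the KZ functor $\text{KZ}_c\colon \mo_c \to \mathcal{H}_q(W)\text{-}\Mod$ is exact and is a quotient functor that kills precisely the modules supported on the discriminant; moreover, the parameter $q$ of the Hecke algebra is determined by $c$ through the exponentials of the $c(s)$ (together with the eigenvalues $\lambda_s$ of the reflections). The first step is to make this correspondence $c \mapsto q$ completely explicit for $G_{12}$: since $G_{12}$ has a single conjugacy class of reflections of order $2$, the cyclotomic Hecke algebra has essentially one parameter, and $q$ will be (up to normalization) $e^{2\pi i c}$ or a similar expression. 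I would record the precise relation so that conditions on $q$ translate into congruence conditions on $c$.

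The second step is the Hecke-algebra side. It is a standard fact (used repeatedly in the Cherednik algebra literature, and attributed in the introduction to CHEVIE/GAP computations) that the generic cyclotomic Hecke algebra $\mathcal{H}_q(W)$ is split semisimple except when $q$ lies on an explicitly computable finite set of ``bad'' hyperplanes, determined by the vanishing of certain Schur elements. Here I would invoke the CHEVIE package to compute the Schur elements of $\mathcal{H}_q(G_{12})$ and read off exactly which values of $q$ force non-semisimplicity. Translating back through the $c \mapsto q$ dictionary of step one, these values correspond precisely to $c = m/12$ with $m \equiv 1,3,4,5,6,7,8,9,11 \pmod{12}$ — note the absence of $m \equiv 0,2,10 \pmod{12}$, which should reflect the fact that at those parameters $q$ is a root of unity of order dividing $2$ or a unit where the Hecke algebra happens to stay semisimple (this is exactly the sort of thing the Schur element computation pins down).

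The third step combines the two sides. If $c$ is \emph{not} one of the listed values, then $\mathcal{H}_q(G_{12})$ is semisimple. One then argues that $\mo_c$ must itself be semisimple: since $\mo_c$ always has the same number of simple objects as $\mathcal{H}_q(W)$ has (there are $8$ irreducibles of $G_{12}$, matching the rank of the Hecke algebra), and since $\text{KZ}_c$ sends standard modules $M_c(\tau)$ to a basis of the Grothendieck group of $\mathcal{H}_q(W)$-modules and is fully faithful on projectives, a dimension count on $\Hom$ and $\operatorname{Ext}$ spaces (or equivalently the fact that the decomposition matrix of $\mo_c$ maps to that of $\mathcal{H}_q(W)$, which is the identity in the semisimple case) forces the multiplicities $\hat n_{\tau,\sigma}$ to be $\delta_{\tau,\sigma}$. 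Hence $M_c(\tau) = L_c(\tau)$ for all $\tau$, which by the remark in \S2.8 is exactly semisimplicity of $\mo_c$.

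The main obstacle is step two: there is nothing conceptually deep in steps one and three, but extracting the exact list of bad parameters requires a correct and complete computation of the Schur elements (or factorization data) of the cyclotomic Hecke algebra of $G_{12}$ and a careful bookkeeping of the normalization in the $c \leftrightarrow q$ correspondence, so that the congruence classes modulo $12$ come out exactly right. A secondary subtlety is making sure that semisimplicity of $\mathcal{H}_q$ genuinely implies semisimplicity of $\mo_c$ and not merely that the Cartan matrix of $\mo_c$ is trivial on the block hit by KZ — but this follows from the general theory since KZ is a quotient functor whose kernel vanishes on $\mo_c$ when the Hecke algebra has the generic number of simples, i.e. no simple module of $\mo_c$ is killed.
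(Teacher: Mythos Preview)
Your proposal is correct and follows essentially the same route as the paper: relate $c$ to $q=e^{2\pi i c}$, use the KZ functor to reduce semisimplicity of $\mo_c$ to that of $\mathcal{H}_q(G_{12})$, and then invoke the Schur-element criterion (computed via CHEVIE) to read off the bad parameters. The paper packages your step~3 as a clean biconditional (its Lemma~\ref{H=O}): since $KZ_c$ induces an equivalence $\mo_c/\mo_c^{\operatorname{tor}}\simeq \mathcal{H}_q(W)\text{-}\Mod$, semisimplicity of $\mathcal{H}_q(W)$ forces $|\operatorname{Irred}(\mathcal{H}_q(W))|=|\operatorname{Irred}(W)|=|\operatorname{Irred}(\mo_c)|$, hence $\mo_c^{\operatorname{tor}}=0$ and $\mo_c$ is equivalent to a semisimple category; this is exactly the argument you sketch in your final paragraph, and is somewhat cleaner than the decomposition-matrix reasoning you first offer.
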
  

Proof of this theorem is deferred until section \ref{Hecke}.

 From now on we will write all fractions $r/d$ reduced, and assume $r,d>0$. The following result describes the structure of $\mo_{\pm r/d}$ for pairs $(r,d)$ that the previous theorem doesn't cover. Whenever $c$ is clear from the context, we write $L(\tau), M(\tau)$ instead of $[L_{c}(\tau)]$, $[M_{c}(\tau)]$ for readability.
 
\begin{thm}
 \label{main} 
 \begin{enumerate}
 \item If $$[\LL{r/d}{\tau}]=\sum_{\sigma} n_{\tau,\sigma}[\M{r/d}{\sigma}],$$ then $$[\LL{-r/d}{\rp{1}{-} \otimes \tau}]=\sum_{\sigma}{n_{\tau,\sigma}}[\M{-r/d}{\rp{1}{-} \otimes \sigma}] .$$ In particular, $\operatorname{dim}(\LL{r/d}{\tau}) < \infty$ iff $\operatorname{dim}(\LL{-r/d}{\rp{1}{-} \otimes \tau}) < \infty $, and the character formulas  for $c=-r/d$ are easily derived from those for $c=r/d$. \label{main1}

\item The expressions for $[\LL{r/d}{\tau}]$ for $r>0$ and $d \in \left\{2,3,4,12\right\}$ are given below. We include characters for those $\LL{r/d}{\tau}$ that are finite dimensional. For all pairs $(c,\tau)$ that don't appear, $M_{c}(\tau)=L_{c}(\tau)$. 

\begin{itemize}
\item $\mathbf{d=12, r \equiv 1,11,17,19 (\operatorname{mod}24)}$
\begin{eqnarray*}
L(\mathbf{1} _{+})&=&M(\mathbf{1}_{+}) - M(\mathbf{2}_{-}) + M(\mathbf{1}_{-}) \\
L(\mathbf{2}_{-})&=&M(\mathbf{2}_{-}) - M(\mathbf{1}_{-})
\end{eqnarray*}

$L_{r/12}(\mathbf{1}_{+})$ has dimension $r^2$, and character $$\chr_{\LL{r/12}{\rp{1}{+}}}(t,w)=\frac{\operatorname{det}_{\h^*}(1-wt^r)}{\operatorname{det}_{\h^*}(1-wt)} \cdot \ch{1}{+}(w)\cdot t^{1-r} \; .$$ 

\item $\mathbf{d=12, r \equiv 5,7,13,23 (\operatorname{mod}24)}$
\begin{eqnarray*}
L(\mathbf{1} _{+})&=&M(\mathbf{1}_{+}) - M(\mathbf{2}_{+}) + M(\mathbf{1}_{-}) \\
L(\mathbf{2}_{+})&=&M(\mathbf{2}_{+}) - M(\mathbf{1}_{-})
\end{eqnarray*}

$L_{r/12}(\mathbf{1}_{+})$ has dimension $r^2$, and character $$\chr_{\LL{r/12}{\rp{1}{+}}}(t,w)=\frac{\operatorname{det}_{\h}(1-wt^r)}{\operatorname{det}_{\h^*}(1-wt)} \cdot \ch{1}{+}(w)\cdot t^{1-r} \; .$$ 

\item $\mathbf{d=4, r \equiv 1,3 (\operatorname{mod}8)}$
\begin{eqnarray*}
L(\mathbf{1} _{+})&=&M(\mathbf{1}_{+}) - M(\mathbf{3}_{+}) + M(\mathbf{2}_{+}) \\
L(\mathbf{2} _{+})&=&M(\mathbf{2}_{+}) - M(\mathbf{3}_{-}) + M(\mathbf{1}_{-}) \\
L(\mathbf{3} _{+})&=&M(\mathbf{3}_{+}) - M(\mathbf{2}_{+})- M(\mathbf{4}) + M(\mathbf{3}_{-}) \\
L(\mathbf{3} _{-})&=&M(\mathbf{3}_{-}) - M(\mathbf{1}_{-}) \\
L(\mathbf{4})&=&M(\mathbf{4}) - M(\mathbf{3}_{-}) 
\end{eqnarray*}

$\LL{r/4}{\rp{1}{+}}$ has dimension $3r^2$, and character $$\chr_{\LL{r/4}{\rp{1}{+}}}(t,w)=\frac{\operatorname{det}_{\h^*}(1-wt^r)}{\operatorname{det}_{\h^*}(1-wt)} \cdot (\ch{1}{+}(w)\cdot t^{1-3r}+\ch{2}{-}(w)\cdot t^{1-2r}) \; .$$

$\LL{r/4}{\rp{2}{+}}$ has dimension $3r^2$, and character $$\chr_{\LL{r/4}{\rp{2}{+}}}(t,w)=\frac{\operatorname{det}_{\h^*}(1-wt^r)}{\operatorname{det}_{\h^*}(1-wt)} \cdot (\ch{2}{+}(w)\cdot t+\ch{1}{+}(w)\cdot t^{1+r}) \; .$$

$\LL{r/4}{\rp{3}{+}}$ has dimension $3r^2$, and character $$\chr_{\LL{r/4}{\rp{3}{+}}}(t,w)=\frac{\operatorname{det}_{\h^*}(1-wt^r)}{\operatorname{det}_{\h^*}(1-wt)} \cdot \ch{3}{+}(w)\cdot t^{1-r} \; .$$

\item $\mathbf{d=4, r \equiv 5,7 (\operatorname{mod}8)}$
\begin{eqnarray*}
L(\mathbf{1} _{+})&=&M(\mathbf{1}_{+}) - M(\mathbf{3}_{+}) + M(\mathbf{2}_{-}) \\
L(\mathbf{2} _{-})&=&M(\mathbf{2}_{-}) - M(\mathbf{3}_{-}) + M(\mathbf{1}_{-}) \\
L(\mathbf{3} _{+})&=&M(\mathbf{3}_{+}) - M(\mathbf{2}_{-})- M(\mathbf{4}) + M(\mathbf{3}_{-}) \\
L(\mathbf{3} _{-})&=&M(\mathbf{3}_{-}) - M(\mathbf{1}_{-}) \\
L(\mathbf{4})&=&M(\mathbf{4}) - M(\mathbf{3}_{-}) 
\end{eqnarray*}

$\LL{r/4}{\rp{1}{+}}$ has dimension $3r^2$, and character $$\chr_{\LL{r/4}{\rp{1}{+}}}(t,w)=\frac{\operatorname{det}_{\h}(1-wt^r)}{\operatorname{det}_{\h^*}(1-wt)} \cdot (\ch{1}{+}(w)\cdot t^{1-3r}+\ch{2}{+}(w)\cdot t^{1-2r}) \; .$$

$\LL{r/4}{\rp{2}{-}}$ has dimension $3r^2$, and character $$\chr_{\LL{r/4}{\rp{2}{-}}}(t,w)=\frac{\operatorname{det}_{\h}(1-wt^r)}{\operatorname{det}_{\h^*}(1-wt)} \cdot (\ch{2}{-}(w)\cdot t+\ch{1}{+}(w)\cdot t^{1+r}) \; .$$

$\LL{r/4}{\rp{3}{+}}$ has dimension $3r^2$, and character $$\chr_{\LL{r/4}{\rp{3}{+}}}(t,w)=\frac{\operatorname{det}_{\h}(1-wt^r)}{\operatorname{det}_{\h^*}(1-wt)} \cdot \ch{3}{+}(w)\cdot t^{1-r} \; .$$

\item $\mathbf{d=3, r \equiv 1,2 (\operatorname{mod}3)}$
\begin{eqnarray*}
L(\mathbf{1} _{+})&=&M(\mathbf{1}_{+}) - M(\mathbf{2})+M(\mathbf{1}_{-}) \\
L(\mathbf{2})&=&M(\mathbf{2}) - M(\mathbf{1}_{-}) 
\end{eqnarray*}

$\LL{r/3}{\rp{1}{+}}$ has dimension $16r^2$, and character 
$$\chr_{\LL{r/3}{\rp{1}{+}}}(t,w)=\frac{\operatorname{det}_{\h^*}(1-wt^r)}{\operatorname{det}_{\h^*}(1-wt)} \cdot (\ch{1}{+}(w)\cdot t^{1-4r}+\ch{2}{-}(w)\cdot t^{1-3r}+\ch{3}{+}(w)\cdot t^{1-2r}+$$ $$+\ch{4}{\;}(w)\cdot t^{1-r}+\ch{3}{+}(w)\cdot t+\ch{2}{+}(w)\cdot t^{1+r}+\ch{1}{+}(w)\cdot t^{1+2r}) \; .$$

\item $\mathbf{d=2, r \equiv 1 (\operatorname{mod}2)}$
\begin{eqnarray*}
L(\mathbf{1} _{+})&=&M(\mathbf{1}_{+}) - M(\mathbf{3}_{+})+M(\mathbf{2}) \\
L(\mathbf{2})&=&M(\mathbf{2}) - M(\mathbf{3}_{-})+M(\mathbf{1}_{-}) \\
L(\mathbf{3}_{+})&=&M(\mathbf{3}_{+}) - M(\mathbf{2})-M(\mathbf{1}_{-}) \\
L(\mathbf{3}_{-})&=&M(\mathbf{3}_{-}) -M(\mathbf{1}_{-}) \\
\end{eqnarray*}

$\LL{r/2}{\rp{1}{+}}$ has dimension $12r^2$, and character $$\chr_{\LL{r/2}{\rp{1}{+}}}(t,w)=\frac{\operatorname{det}_{\h^*}(1-wt^r)}{\operatorname{det}_{\h^*}(1-wt)} \cdot (\ch{1}{+}(w)\cdot t^{1-6r} + \ch{2}{-}(w)\cdot t^{1-5r} +$$ $$+ \ch{3}{+}(w)\cdot t^{1-4r} + \ch{4}{\;}(w)\cdot t^{1-3r} + \ch{2}{\;}(w)\cdot t^{1-2r}) \; .$$

$\LL{r/2}{\rp{2}{\;}}$ has dimension $12r^2$, and character $$\chr_{\LL{r/2}{\rp{2}{\;}}}(t,w)=\frac{\operatorname{det}_{\h^*}(1-wt^r)}{\operatorname{det}_{\h^*}(1-wt)} \cdot (\ch{2}{\;}(w)\cdot t + \ch{4}{\;}(w)\cdot t^{1-r} + \ch{3}{+}(w)\cdot t^{1-2r} + \ch{2}{+}(w)\cdot t^{1-3r} + \ch{1}{+}(w)\cdot t^{1-4r}) \; .$$

\end{itemize}
\end{enumerate}
\end{thm}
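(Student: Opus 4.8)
Part~\ref{main1} is the quickest. The introduction supplies an exact equivalence $\mo_{c}\to\mo_{-c}$; for $G_{12}$ it can be produced, e.g., from the isomorphism $\h^{*}\cong\h\otimes\rp{1}{-}$ (as the character table shows, $\rp{2}{-}\cong\rp{2}{+}\otimes\rp{1}{-}$) together with the natural duality on category $\mo$, and it carries $\M{c}{\tau}$ to $\M{-c}{\rp{1}{-}\otimes\tau}$ and $\LL{c}{\tau}$ to $\LL{-c}{\rp{1}{-}\otimes\tau}$; one checks this is consistent with the $\mathbf{h}$-weights, since $h_{-c}(\rp{1}{-}\otimes\tau)=h_{c}(\tau)$ because $G_{12}$ has a single conjugacy class of $12$ reflections, all of order $2$, so that $h_{c}(\tau)=1-12c\,\chi_{\tau}(e)/\dim\tau$. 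Applying the equivalence to a length filtration of $\M{r/d}{\sigma}$ realizing $[\LL{r/d}{\tau}]=\sum_{\sigma}n_{\tau,\sigma}[\M{r/d}{\sigma}]$ gives the asserted identity in $K(\mo_{-r/d})$, and preserves finite-dimensionality; the character formula for $c=-r/d$ is obtained from that for $c=r/d$ by the substitutions $\chi_{\sigma}\mapsto\rp{1}{-}\otimes\chi_{\sigma}$ and $\operatorname{det}_{\h^{*}}\mapsto\operatorname{det}_{\h}$.

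For part~(2) the plan is to reduce to $c\in\{1/12,1/4,1/3,1/2\}$ and then compute those four cases directly. The reduction uses the equivalences $\Phi_{c,c'}\colon\mo_{c}\to\mo_{c'}$ quoted in the introduction, namely $\mo_{1/d}\simeq\mo_{r/d}$ for $\gcd(r,d)=1$, $d\neq 2$, and $\mo_{c}\simeq\mo_{c+1}$. Each $\Phi_{c,c'}$ is exact and satisfies $\Phi_{c,c'}(\M{c}{\tau})=\M{c'}{\varphi_{c,c'}(\tau)}$ and $\Phi_{c,c'}(\LL{c}{\tau})=\LL{c'}{\varphi_{c,c'}(\tau)}$ for a permutation $\varphi_{c,c'}$ of $\mathrm{Irr}(G_{12})$, which I would compute via the recipe of \cite{GG} (matching the $\mathbf{h}$-weights $h_{c}(\tau)$). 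The finitely many distinct permutations obtained this way are exactly the congruence cases in the statement ($r$ mod $24,8,3,2$ for $d=12,4,3,2$); when $\varphi_{c,c'}$ incorporates the $\rp{1}{-}$-twist one lands on the $\operatorname{det}_{\h}$ version of the characters rather than the $\operatorname{det}_{\h^{*}}$ version. Granting the four base cases, transporting the Grothendieck-group relations and characters through the $\varphi$'s yields every case listed.

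It remains to treat $c\in\{1/12,1/4,1/3,1/2\}$. For each such $c$ I would: (a) list the $h_{c}(\tau)$ for the eight $\tau\in\mathrm{Irr}(G_{12})$ and, using the upper-triangularity recalled in the excerpt, read off which $\M{c}{\tau}$ can be reducible and which $\sigma$ can appear in $\M{c}{\tau}$ (only those with $h_{c}(\sigma)-h_{c}(\tau)\in\mz_{>0}$); for $G_{12}$ this already singles out short chains involving at most five representations. (b) Compute the contravariant form $B$ on the graded pieces $\M{c}{\tau}[h_{c}(\tau)+i]$ degree by degree, from the explicit Dunkl-operator action on $S\h^{*}\otimes\tau$; the rank of $B$ in each degree gives the graded $W$-character of $\LL{c}{\tau}=\M{c}{\tau}/\ker B$. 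For the $\LL{c}{\tau}$ that turn out finite dimensional this is a finite MAGMA computation producing the stated dimensions $r^{2},3r^{2},16r^{2},12r^{2}$, which I would then recognize as the claimed ratios $\operatorname{det}_{\h^{*}}(1-wt^{r})/\operatorname{det}_{\h^{*}}(1-wt)$ times a small $W$-graded factor. (c) For the infinite-dimensional $\LL{c}{\tau}$, identify $J_{c}(\tau)$ from below: its lowest nonzero graded piece is a $W$-subrepresentation isomorphic to some $\sigma$, killed by $B$, hence a quotient of $\M{c}{\sigma}$, so $[\LL{c}{\tau}]=[\M{c}{\tau}]$ minus the class of the image of $\M{c}{\sigma}$ in $\M{c}{\tau}$; since only finitely many $\tau$ are relevant and the $h_{c}$-chains are short, descending induction on $h_{c}(\tau)$ determines all $\hat{n}_{\tau,\sigma}$, hence all $n_{\tau,\sigma}$. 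I would cross-check this with two independent inputs: the exact parabolic restriction functors to the rank-$1$ parabolic $\mz/2\mz\subset G_{12}$ (all twelve reflection subgroups being conjugate), whose category $\mo$ is elementary and whose vanishing detects finite-dimensionality and more generally constrains the Jordan--H\"older content; and the exact functor $\mathrm{KZ}_{c}$ together with the structure of $\mathcal{H}_{q}(G_{12})$ at the relevant $q$ already determined for Theorem~\ref{semisimple}, which pins down the full-support part of each $\M{c}{\tau}$.

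The main obstacle is step~(c): passing from the numerical data the form computes in any fixed degree to the exact decomposition numbers, i.e.\ proving that the submodule generators found in low degree generate all of $J_{c}(\tau)$ and that no further composition factor is hiding in high degree. The $\mathbf{h}$-weight upper-triangularity bounds the candidate $\sigma$'s and makes the induction terminate, but one must still verify the multiplicity with which each $\M{c}{\sigma}$ enters $J_{c}(\tau)$; for $G_{12}$ this is feasible because at most five of the eight $\mathrm{Irr}(G_{12})$ are involved at any of the four values of $c$, and the triple over-determination from the form, from parabolic restriction, and from $\mathrm{KZ}_{c}$ leaves no slack. A minor but real bookkeeping difficulty is keeping track of the permutations $\varphi_{c,c'}$ and of the resulting $\operatorname{det}_{\h}$ versus $\operatorname{det}_{\h^{*}}$ dichotomy across all the congruence classes.
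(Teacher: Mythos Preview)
Your outline follows the paper's architecture: part~(1) via the $c\mapsto -c$ equivalence (the paper uses the explicit algebra isomorphism $H_c\to H_{-c}$, identity on $\h,\h^*$ and $w\mapsto\mathbf{1}_-(w)w$), reduction of part~(2) to $c\in\{1/12,1/4,1/3,1/2\}$ via scaling and shift functors with the permutation $\varphi$ from \cite{GG}, and case-by-case work at those four values using $h_c$-weights, the form $B$, and decompositions of $S^k\h^*\otimes\tau$.

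Two places where your sketch is loose. First, the shift equivalence $\mo_c\simeq\mo_{c+1}$ is \emph{not} unconditional: it holds only when $c$ is spherical (every object of $\mo_c$ has a nonzero $W$-invariant). The paper must compute the $c=1/2$ characters first, check sphericality from them, and only then iterate to higher half-integers. You treat this as given. Second, your parenthetical that $\varphi_{c,c'}$ is found by ``matching the $\mathbf{h}$-weights'' cannot work here: $\mathbf{2},\mathbf{2}_+,\mathbf{2}_-,\mathbf{4}$ all have $h_c=1$ for every $c$, so weights do not separate them. The \cite{GG} recipe actually uses a Galois action on the Hecke characters over $\mathbb{Q}(e^{2\pi i/2d})$, and the outcome for $G_{12}$ is always either the identity or the transposition $(\mathbf{2}_+\ \mathbf{2}_-)$.

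On the toolkit for the four base cases, the paper does not use $\mathrm{KZ}_c$ or parabolic restriction. Instead it relies on two devices you do not mention: Lemma~\ref{35}, which upgrades the necessary condition $h_c(\sigma)-h_c(\tau)=1$ to a sufficient one for $\sigma\subset\h^*\otimes\tau$ to be singular, and the ``$A$-matrix'' constraint (Lemma~\ref{useA}) that the coefficient vector $(n_{\tau,\sigma})_\sigma$ of any finite-dimensional $L_c(\tau)$ lies in an explicit, small nullspace (the character must be a polynomial). Together with a handful of rank computations of $B$ in low degree, these settle everything except a single coefficient in $[L_{1/2}(\mathbf{3}_+)]$, which the paper pins down by parabolic \emph{induction} from the $\mathbb{Z}/2\mathbb{Z}$ stabilizer and nonnegativity of multiplicities. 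Your proposed cross-checks via restriction and $\mathrm{KZ}_c$ could in principle replace this, but that is exactly the step where you should expect genuine work rather than overdetermination for free.
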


This theorem follows directly from sections \ref{equivalences} and \ref{calculations}.

Using this result, the expressions in the Grothendieck group for $[\M{r/d}{\tau}]$ in terms of $[\LL{r/d}{\sigma}]$ are immediate. We include them for the sake of thoroughness.

\begin{cor}
\label{korolar} 
If $$[\M{r/d}{\tau}]=\sum_{\sigma}{\hat{n}_{\tau,\sigma}}[\LL{r/d}{\sigma}] ,$$ then $$[\M{-r/d}{\rp{1}{-} \otimes \tau}]=\sum_{\sigma}{\hat{n}_{\tau,\sigma}}[\LL{-r/d}{\rp{1}{-} \otimes \sigma}] \; .$$ The Grothendieck group expressions for standard modules in terms of irreducible modules are given below for all $c>0$. For pairs $(c,\tau)$ that don't appear on the list, $M_{c}(\tau)=L_{c}(\tau)$.

\begin{itemize}
\item $\mathbf{d=12, r \equiv 1,11,17,19 (\operatorname{mod}24)}$
\begin{eqnarray*}
M(\mathbf{1} _{+})&=&L(\mathbf{1}_{+}) + L(\mathbf{2}_{-}) \\
M(\mathbf{2}_{-})&=&L(\mathbf{2}_{-}) + L(\mathbf{1}_{-})
\end{eqnarray*}

\item $\mathbf{d=12, r \equiv 5,7,13,23 (\operatorname{mod}24)}$
\begin{eqnarray*}
M(\mathbf{1} _{+})&=&L(\mathbf{1}_{+}) + L(\mathbf{2}_{+}) \\
M(\mathbf{2}_{+})&=&L(\mathbf{2}_{+}) + L(\mathbf{1}_{-})
\end{eqnarray*}

\item $\mathbf{d=4, r \equiv 1,3 (\operatorname{mod}8)}$
\begin{eqnarray*}
M(\mathbf{1} _{+})&=&L(\mathbf{1}_{+}) + L(\mathbf{3}_{+}) + L(\mathbf{4}) \\
M(\mathbf{2} _{+})&=&L(\mathbf{2}_{+}) + L(\mathbf{3}_{-})  \\
M(\mathbf{3} _{+})&=&L(\mathbf{3}_{+}) +L(\mathbf{2}_{+})+L(\mathbf{4}) +L(\mathbf{3}_{-}) \\
M(\mathbf{3} _{-})&=&L(\mathbf{3}_{-}) + L(\mathbf{1}_{-}) \\
M(\mathbf{4})&=&L(\mathbf{4}) +L(\mathbf{3}_{-}) + L(\mathbf{1}_{-})
\end{eqnarray*}

\item $\mathbf{d=4, r \equiv 5,7 (\operatorname{mod}8)}$
\begin{eqnarray*}
M(\mathbf{1} _{+})&=&L(\mathbf{1}_{+}) +L(\mathbf{3}_{+}) + L(\mathbf{4}) \\
M(\mathbf{2} _{-})&=&L(\mathbf{2}_{-}) +L(\mathbf{3}_{-}) \\
M(\mathbf{3} _{+})&=&L(\mathbf{3}_{+}) +L(\mathbf{2}_{-})+ L(\mathbf{4}) + M(\mathbf{3}_{-}) \\
M(\mathbf{3} _{-})&=&L(\mathbf{3}_{-}) + L(\mathbf{1}_{-}) \\
M(\mathbf{4})&=&L(\mathbf{4}) + L(\mathbf{3}_{-}) + L(\mathbf{1}_{-})
\end{eqnarray*}

\item $\mathbf{d=3, r \equiv 1,2 (\operatorname{mod}3)}$
\begin{eqnarray*}
M(\mathbf{1} _{+})&=&L(\mathbf{1}_{+}) +L(\mathbf{2}) \\
M(\mathbf{2})&=&L(\mathbf{2}) +L(\mathbf{1}_{-}) 
\end{eqnarray*}

\item $\mathbf{d=2, r \equiv 1 (\operatorname{mod}2)}$
\begin{eqnarray*}
M(\mathbf{1} _{+})&=&L(\mathbf{1}_{+}) +L(\mathbf{3}_{+})+L(\mathbf{1}_{-}) \\
M(\mathbf{2})&=&L(\mathbf{2}) +L(\mathbf{3}_{-}) \\
M(\mathbf{3}_{+})&=&L(\mathbf{3}_{+}) +L(\mathbf{2})+L(\mathbf{3}_{-})+L(\mathbf{1}_{-}) \\
M(\mathbf{3}_{-})&=&L(\mathbf{3}_{-}) +L(\mathbf{1}_{-}) \\
\end{eqnarray*}

\end{itemize}
\end{cor}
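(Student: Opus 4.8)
The plan is to deduce Corollary \ref{korolar} from Theorem \ref{main} by elementary linear algebra over $\mz$, with no further input from the representation theory of $H_c(\gtw,\h)$. The starting point is the fact, recorded in Section 2, that for a fixed $c$ the integer matrices $(\hat n_{\tau,\sigma})$ and $(n_{\tau,\sigma})$ are mutually inverse: after ordering $\operatorname{Irr}(\gtw)$ so that $h_c(\sigma)-h_c(\tau)\in\mz_{>0}$ forces $\sigma$ to follow $\tau$, both matrices are upper triangular with $1$'s on the diagonal, and the system $[\M{c}{\tau}]=\sum_\sigma\hat n_{\tau,\sigma}[\LL{c}{\sigma}]$ is literally the inverse of the system $[\LL{c}{\tau}]=\sum_\sigma n_{\tau,\sigma}[\M{c}{\sigma}]$ provided by Theorem \ref{main}. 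Thus $\hat n$ is completely determined by $n$, and the whole corollary reduces to inverting the explicit matrices of that theorem.

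First I would prove the first assertion abstractly. The operation $\tau\mapsto\rp{1}{-}\otimes\tau$ is an involutive permutation $\iota$ of $\operatorname{Irr}(\gtw)$ — by the naming conventions of Section 2 it interchanges $\rp{1}{+}\leftrightarrow\rp{1}{-}$, $\rp{2}{+}\leftrightarrow\rp{2}{-}$, $\rp{3}{+}\leftrightarrow\rp{3}{-}$ and fixes $\rp{2}{\;}$ and $\rp{4}{\;}$. Theorem \ref{main}(\ref{main1}) says precisely that the transition matrix for $c=-r/d$ is obtained from the one for $c=r/d$ by conjugating with the permutation matrix of $\iota$. Since conjugation by a permutation commutes with taking inverses, the same identity holds for the $\hat n$'s, i.e. $\hat n^{(-r/d)}_{\iota\tau,\iota\sigma}=\hat n^{(r/d)}_{\tau,\sigma}$. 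Expanding $[\M{-r/d}{\rp{1}{-}\otimes\tau}]$ in the basis $\{[\LL{-r/d}{\sigma'}]\}$ and re-indexing $\sigma'=\iota\sigma$ then gives $[\M{-r/d}{\rp{1}{-}\otimes\tau}]=\sum_\sigma\hat n_{\tau,\sigma}[\LL{-r/d}{\rp{1}{-}\otimes\sigma}]$, which is what is claimed; no re-examination of $\Bf{h}$-weights is needed, as triangularity is preserved under inversion.

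For the second assertion I would proceed case by case through the six families listed in Theorem \ref{main}(2). In each family one takes the given relations together with the convention that $M_c(\sigma)=L_c(\sigma)$ for every remaining $\sigma$; this produces a unitriangular matrix $(n_{\tau,\sigma})$ equal to the identity outside a block of size at most five, which one inverts by back-substitution. For instance, for $d=12$ and $r\equiv 1,11,17,19\ (\operatorname{mod}24)$: from $M(\rp{1}{-})=L(\rp{1}{-})$ one obtains $M(\rp{2}{-})=L(\rp{2}{-})+M(\rp{1}{-})=L(\rp{2}{-})+L(\rp{1}{-})$, and then $M(\rp{1}{+})=L(\rp{1}{+})+M(\rp{2}{-})-M(\rp{1}{-})=L(\rp{1}{+})+L(\rp{2}{-})$, matching the stated list; the other five families are handled identically. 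A convenient internal check is that every coefficient obtained this way must be a non-negative integer, with $1$ on the diagonal, in accordance with the general properties of $\hat n_{\tau,\sigma}$ from Section 2.

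I do not expect a genuine obstacle here; the only points needing care are bookkeeping ones. One must fix the ordering of $\operatorname{Irr}(\gtw)$ induced by the weights $h_c(\tau)$ so as to be certain the relevant matrices are really triangular, and hence invertible over $\mz$ with the asserted shape; and one should observe that inverting the system never introduces a correction term for a representation $\sigma$ with $M_c(\sigma)=L_c(\sigma)$, which is automatic because the complementary block of $(n_{\tau,\sigma})$ is the identity. All the mathematical content resides in Theorem \ref{main}; the corollary only re-expresses it in the inverse basis.
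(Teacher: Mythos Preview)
Your proposal is correct and matches the paper's own approach exactly: the paper's proof consists of the single sentence ``Follows directly from the previous theorem,'' and your back-substitution argument is precisely the elementary matrix inversion implicit in that remark.
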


\begin{proof}
Follows directly from the previous theorem.
\end{proof}

Finally, it is a direct calculation to derive the set of aspherical values for $G_{12}$ from this data. A value of $c$ is called \It{aspherical} if there exists a module in category $\mo_{c}$ which has no nontrivial $W$-invariants. This is clearly equivalent to existence of an irreducible module in category $\mo$ which contains no nontrivial $W$-invariants. The description of such values for any complex reflection group is an interesting open question. 

\begin{cor} 
\label{aspherical} The set of aspherical values of for $G_{12}$ is $$\Sigma(G_{12},\h)=\left\{\frac{1}{4}, \frac{-1}{2}, \frac{-1}{3}, \frac{-2}{3},  \frac{-1}{4}, \frac{-3}{4}, \frac{-5}{4}, \frac{-1}{12}, \frac{-5}{12}, \frac{-7}{12}, \frac{-11}{12}, \right\} .$$ \end{cor}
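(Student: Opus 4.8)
The plan is to compute the set $\Sigma(G_{12},\h)$ directly from the Grothendieck-group data assembled in Theorem \ref{main} and Corollary \ref{korolar}, together with the symmetry in part \ref{main1} of Theorem \ref{main}, using the character formulas to detect $W$-invariants. The key observation is that $c$ is aspherical if and only if some irreducible module $L_c(\tau)$ in $\mo_c$ has $\dim \Hom_W(\rp{1}{+}, L_c(\tau)) = 0$, equivalently, the coefficient of $\ch{1}{+}$ in the $W$-character of $L_c(\tau)$ (obtained by setting $t=1$ formally, or rather by reading off the multiplicity of $\rp{1}{+}$ graded piece by graded piece) vanishes. Since for generic $c$ (the semisimple values of Theorem \ref{semisimple}) every $L_c(\tau) = M_c(\tau) = S\h^* \otimes \tau$ contains $\rp{1}{+}$ (take $\tau = \rp{1}{+}$ and degree $0$, or in general use that $S\h^*$ contains all polynomial degrees), no semisimple value is aspherical; so it suffices to examine the finitely many families $c = \pm r/d$ with $d \in \{2,3,4,12\}$ listed in Theorem \ref{main}, and within each to ask whether the multiplicity of $\rp{1}{+}$ in $L_c(\tau)$ is zero for some $\tau$.

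The main technical step is to make ``multiplicity of $\rp{1}{+}$ in $L_c(\tau)$'' computable from the data we already have. For each of the six families in Theorem \ref{main}\ref{main2}, I would take the Grothendieck-group expression $[L_c(\tau)] = \sum_\sigma n_{\tau,\sigma}[M_c(\sigma)]$ and combine it with $\chr_{M_c(\sigma)}(t,w) = \chi_\sigma(w)\,t^{h_c(\sigma)}/\det_{\h^*}(1-tw)$. Projecting onto the $\rp{1}{+}$-isotypic part amounts to applying the inner-product functional $\langle \chi_{\rp{1}{+}}, -\rangle_W = \frac{1}{|W|}\sum_{w}\overline{\chi_{\rp{1}{+}}(w)}(\cdot)$ to the coefficient of each power of $t$; because $\chi_{\rp{1}{+}}$ is the trivial character this is just the average over $W$. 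So the $\rp{1}{+}$-multiplicity generating function of $L_c(\tau)$ is $\sum_\sigma n_{\tau,\sigma}\, t^{h_c(\sigma)}\cdot\frac{1}{|W|}\sum_{w}\frac{\chi_\sigma(w)}{\det_{\h^*}(1-tw)}$, and one uses Molien's formula $\frac{1}{|W|}\sum_w \frac{\chi_\sigma(w)}{\det_{\h^*}(1-tw)} = \sum_{i\ge 0}\dim\Hom_W(\sigma, S^i\h^*)\,t^i$, the Hilbert series of the $\sigma$-isotypic component of the coinvariant-type ring $S\h^*$. The task is then the bookkeeping: for each family and each $\tau$ appearing, check whether the resulting power series in $t$ is identically zero. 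For the finite-dimensional $L_c(\rp{1}{+})$ the character formulas are given explicitly in Theorem \ref{main}, so in those cases one just reads off whether the $\rp{1}{+}$-coefficient ever vanishes — it does not, since the degree-$0$ term is always $\ch{1}{+}(w)\cdot t^{(\dots)}$ with coefficient $1$ — hence the aspherical values must come from the \emph{other} irreducibles $L_c(\tau)$, $\tau \ne \rp{1}{+}$, in each family.

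Concretely I would go family by family. For the positive values $c = r/d$: from the lists, $L_{r/d}(\tau)$ for $\tau \ne \rp{1}{+}$ is always of the form $M(\tau) - M(\sigma)$ (or a short alternating sum), and one checks that the $\rp{1}{+}$-multiplicity generating function $\sum_\sigma n_{\tau,\sigma}[$Molien series of $\sigma] \cdot t^{h_c(\sigma)}$ vanishes precisely in the cases recorded in the final answer — the one positive aspherical value being $c = 1/4$, where $[L_{1/4}(\rp{3}{+})] = M(\rp{3}{+}) - M(\rp{2}{+}) - M(\rp{4}) + M(\rp{3}{-})$ has no $\rp{1}{+}$ (indeed none of $\rp{2}{+},\rp{3}{\pm},\rp{4}$ contains $\rp{1}{+}$ in low degrees, and the degrees $h_c$ are spaced by $r/4$ so no cancellation from $M(\rp{1}{+})$ can occur since that term is absent). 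For the negative values, I would invoke Theorem \ref{main}\ref{main1}: $[L_{-r/d}(\rp{1}{-}\otimes\tau)] = \sum_\sigma n_{\tau,\sigma}[M_{-r/d}(\rp{1}{-}\otimes\sigma)]$, so the $\rp{1}{+}$-multiplicity in $L_{-r/d}(\rp{1}{-}\otimes\tau)$ equals the $\rp{1}{-}$-multiplicity in $\sum_\sigma n_{\tau,\sigma}[M_{r/d}(\sigma)]$, again computable by Molien. Running through $d \in \{2,3,4,12\}$ and $r$ in the relevant residue classes, with $r$ small enough that one only needs the low-degree structure (larger $r$ give the same pattern by the $\mo_c \to \mo_{c+1}$ and $\mo_{1/d}\to\mo_{r/d}$ equivalences from section \ref{equivalences}, which shift but do not destroy the vanishing), produces exactly the eleven negative fractions listed.

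**The main obstacle** I anticipate is not conceptual but combinatorial precision: one must be careful that an apparent non-vanishing of the $\rp{1}{+}$-multiplicity at low degrees is not later cancelled (or vice versa) by a contribution from a standard module at a higher $h_c$-weight, so the argument genuinely requires knowing the Molien series $\sum_i \dim\Hom_W(\sigma, S^i\h^*)\,t^i$ for all eight $\sigma$ (these can be written down once and for all from the character table and the two fundamental degrees of $G_{12}$, namely $6$ and $8$) and tracking, for each family, the finite window of $t$-powers in which cancellation could happen — which is finite precisely because the $n_{\tau,\sigma}$ are supported on finitely many $\sigma$ with $h_c(\sigma) - h_c(\tau) \in \mz_{\ge 0}$. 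Once that table is in hand, the corollary is a finite check.
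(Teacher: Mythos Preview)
Your Molien-series computation is a valid strategy in principle, but it diverges from the paper's argument and contains a gap in the reduction step. You assert that ``larger $r$ give the same pattern by the equivalences $\ldots$ which shift but do not destroy the vanishing,'' but the equivalences $\Phi_{1/d,r/d}$ preserve only Grothendieck-group expressions, not the $W$-isotypic decomposition of the resulting modules: the $\rp{1}{+}$-multiplicity of $L_{r/d}(\tau)$ depends on $r$ through the weights $h_{r/d}(\sigma)$, so the cancellation pattern genuinely changes with $r$. Concretely, $L_{1/4}(\rp{3}{+})$ has no $\rp{1}{+}$-component while $L_{3/4}(\rp{3}{+})$ does (there is a copy of $\rp{1}{+}$ inside $S^2\h^*\otimes\rp{3}{+}$), so the implication ``aspherical at $1/d$ $\Rightarrow$ aspherical at $r/d$'' is false. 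The paper deals with large $r$ by a different mechanism: once a few base values are checked spherical, the shift-functor property from section~\ref{shift} (if $c>0$ is spherical then so is $c+1$) propagates sphericity upward.

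More fundamentally, the paper bypasses your bookkeeping on infinite-dimensional irreducibles by invoking \cite{BE}~4.1: any irreducible in $\mo_c$ with no $W$-invariants is either finite-dimensional or induced from a proper parabolic. The only nontrivial parabolic of $G_{12}$ is $\mz/2\mz$, whose unique aspherical value is $-1/2$; so away from $c=-1/2$ one only inspects the finite-dimensional $L_c(\tau)$, whose characters are written out explicitly in Theorem~\ref{main}, and at $c=-1/2$ the infinite-dimensional aspherical module $L_{-1/2}(\rp{3}{-})$ comes for free from the parabolic. Your approach could be completed (for instance by arguing that for $r$ large the shifted Molien numerators no longer overlap, so the leading $\tau$-term survives and furnishes a $\rp{1}{+}$), but as written it both omits this structural shortcut and misstates how the equivalences interact with $W$-invariants.
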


Proof of this corollary is at the end of the section \ref{equivalences}.

\section{Hecke algebras, KZ functors, and the proof of Theorem \ref{semisimple}} 
\label{Hecke}

\subsection{Hecke Algebras} One can associate a Hecke algebra to any complex reflection group, and use known things about representations of this Hecke algebra to study representations of the rational Cherednik algebra associated to the same complex reflection group. We give the basic facts here and refer the reader to \cite{BMR}, \cite{C} and \cite{GP} for details.

 Each reflection $s \in S$ fixes a hyperplane $H_{s} \subset \h$. Let $\h_{\operatorname{reg}}= \h - \bigcup_{s \in S}H_{s}$ be the set of regular points of $\mathfrak{h}$. The complex reflection group $W$ acts on this space, and we call the fundamental group of the quotient $BW:=\pi_{1}(\h_{\operatorname{reg}}/W)$ \textit{the braid group of $W$}. In the case of $G_{12}$, the braid group has a presentation by generators and relations as
 $$BG_{12}=\left<T_{e},T_{f},T_{g}\, | \,(T_{e}T_{f}T_{g})^4=(T_{g}T_{e}T_{f})^4=(T_{f}T_{g}T_{e})^4   \right>,$$ with $T_{e}$ corresponding to the class of a loop around the plane fixed by $e$. 
 
 The Hecke algebra $\mathcal{H}_{q}$ of $W$ is a quotient of the group algebra of the braid group of $W$. In the case of $G_{12}$ it is generated over the ring $\mathbb{C}[q,q^{-1}]$ by generators $T_{e},T_{f},T_{g}$ with relations
$$(T_{e}T_{f}T_{g})^4=(T_{g}T_{e}T_{f})^4=(T_{f}T_{g}T_{e})^4$$
$$(T_{e}-1)(T_{e}+q)=0,\, \, \, (T_{f}-1)(T_{f}+q)=0, \, \, \, (T_{g}-1)(T_{g}+q)=0.$$
 
%Throughout we will take $q=e^{u}$ for a formal parameter $u$. 

\subsection{Semisimplicity and Schur Elements}
It is shown in \cite{BMR} and \cite{C} that $\mathcal{H}_q(W)$ is a free module over $\mc[q,q^{-1}]$ of rank $|W|$ and that as a consequence, we can find a linear map $t: \mathcal{H}_q(W) \rar \mc[q,q^{-1}]$ that is a symmetrizing form for $\mathcal{H}_q(W)$ (ie. a symmetric form inducing an isomorphism $\mathcal{H}_q(W)\to \Hom_{\mc[q,q^{-1}]}(\mathcal{H}_q(W),\mc[q,q^{-1}])$ by $a\mapsto\left(b\mapsto t(ab) \right)$). This map can be written explicitly as follows. The field of definition of $\mathfrak{h}$ as a $W$-representation is $\mathbb{Q}[\sqrt{-2}]$, and thus contains just $2$ roots of unity: $1$ and $-1$. It is then explained in \cite{C} that by replacing the variable $q$ by its appropriate root, in our case $v$ such that $v^2=q$, one gets an algebra $\mc(v)\mathcal{H}_{v^2}(W)$ which is semisimple. Evaluation $v\mapsto 1$, sending the Hecke algebra to the group algebra $\mc[W]$, then induces a bijection between irreducible characters $\chi_{\sigma}$ of $W$ and irreducible characters $\chi_{\sigma}'$ of $\mc(v)\mathcal{H}_{v^2}(W)$. There also exist Schur elements ${s_{\sigma}}\in \mc[v,v^{-1}]$, such that the symmetrizing form $t$ can be expressed as
$$t=\sum_{\sigma \in \operatorname{Irred}(W)}\frac{\chi'_{\sigma}}{s_{\sigma}}.$$

These elements are useful in studying the algebra $\mathcal{H}_q(W)$ for special numerical values of $q$ due to the following lemma, which can be found in \cite{GP}, Section 7.4.   

\begin{lemma} A specialization of $\mathcal{H}_{q}(W)$ at $q=v^2 \in \mc$ is semisimple iff $s_{\sigma}(v) \neq 0$ $\forall \sigma \in \operatorname{Irred}(W)$. In this case there exists a bijective correspondence between $\operatorname{Irred}(\mathcal{H}_{q}(W))$ and $\operatorname{Irred}(W)$. \label{Hqsemisimple}
\end{lemma}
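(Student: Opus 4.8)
This is an instance of the general theory of symmetric algebras, and for the part that is not special to $G_{12}$ I would ultimately appeal to \cite{GP}, Chapter 7; let me indicate the mechanism. Write $\mathcal{H} = \mathcal{H}_{v^{2}}(W)$ for the Hecke algebra over $R = \mathbb{C}[v,v^{-1}]$, so that the specialization at $q = v_{0}^{2}$ (with $v_{0}\in\mathbb{C}^{\times}$) is $\mathbb{C}\mathcal{H} := \mathbb{C}\otimes_{R}\mathcal{H}$ along $\theta\colon R\to\mathbb{C}$, $v\mapsto v_{0}$. As recalled above, $\mathcal{H}$ is $R$-free of rank $|W|$ and symmetric with form $t = \sum_{\sigma}\chi'_{\sigma}/s_{\sigma}$, and $\mathbb{C}(v)\mathcal{H}$ is split semisimple with $\operatorname{Irr}(\mathbb{C}(v)\mathcal{H}) = \{\chi'_{\sigma}\}_{\sigma\in\operatorname{Irr}(W)}$ labelled via $v\mapsto 1$. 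Since $\mathcal{H}$ is $R$-free, the isomorphism $\mathcal{H}\xrightarrow{\sim}\operatorname{Hom}_{R}(\mathcal{H},R)$ induced by $t$ base-changes to one $\mathbb{C}\mathcal{H}\xrightarrow{\sim}\operatorname{Hom}_{\mathbb{C}}(\mathbb{C}\mathcal{H},\mathbb{C})$, so $\mathbb{C}\mathcal{H}$ is automatically a symmetric $\mathbb{C}$-algebra with form $\bar t := \theta\circ t$; only its semisimplicity is in question. The argument is organized around the elements $z_{\sigma} := s_{\sigma}e_{\sigma}$, where $e_{\sigma}\in\mathbb{C}(v)\mathcal{H}$ is the central primitive idempotent of $\chi'_{\sigma}$: the dual-basis formula for idempotents in a symmetric algebra gives $z_{\sigma} = \sum_{i}\chi'_{\sigma}(b_{i}^{\vee})b_{i}$ for dual $R$-bases $\{b_{i}\},\{b_{i}^{\vee}\}$ of $\mathcal{H}$, and since character values of $\mathbb{C}(v)\mathcal{H}$ are integral over the integrally closed ring $R$, in fact $z_{\sigma}\in Z(\mathcal{H})$, hence $\bar z_{\sigma}\in Z(\mathbb{C}\mathcal{H})$. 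From $e_{\sigma}^{2}=e_{\sigma}$, $e_{\sigma}e_{\sigma'}=\delta_{\sigma\sigma'}e_{\sigma}$ and $\chi'_{\sigma'}(e_{\sigma}a)=\delta_{\sigma\sigma'}\chi'_{\sigma}(a)$ one gets three identities valued in $R$: $z_{\sigma}^{2}=s_{\sigma}z_{\sigma}$, $\ z_{\sigma}z_{\sigma'}=\delta_{\sigma\sigma'}s_{\sigma}z_{\sigma}$, and $t(z_{\sigma}a)=\chi'_{\sigma}(a)$ for all $a\in\mathcal{H}$.

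For ``$\mathbb{C}\mathcal{H}$ semisimple $\Rightarrow s_{\sigma}(v_{0})\neq 0$ for all $\sigma$'' I would argue by contradiction. Applying $\theta$ to the identities above gives $\bar z_{\sigma}^{2}=s_{\sigma}(v_{0})\,\bar z_{\sigma}$ and $\bar t(\bar z_{\sigma}\bar a)=\psi_{\sigma}(\bar a)$ for all $\bar a$, where $\psi_{\sigma}:=\theta\circ(\chi'_{\sigma}\vert_{\mathcal{H}})$ satisfies $\psi_{\sigma}(1)=\sigma(1)\neq 0$. If $s_{\sigma}(v_{0})=0$, then $\bar z_{\sigma}$ is a central square-zero element of $\mathbb{C}\mathcal{H}$, hence $\bar z_{\sigma}=0$ because the center of a finite-dimensional semisimple algebra has no nonzero nilpotents; but then $\psi_{\sigma}\equiv 0$, contradicting $\psi_{\sigma}(1)\neq 0$.

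For the converse, assume $s_{\sigma}(v_{0})\neq 0$ for every $\sigma$ and set $f_{\sigma}:=s_{\sigma}(v_{0})^{-1}\bar z_{\sigma}\in Z(\mathbb{C}\mathcal{H})$. Reducing $z_{\sigma}z_{\sigma'}=\delta_{\sigma\sigma'}s_{\sigma}z_{\sigma}$ shows that $\{f_{\sigma}\}$ is an orthogonal family of central idempotents, and after localizing $R$ at $(v-v_{0})$ — in which every $s_{\sigma}$ is a unit — the identity $1=\sum_{\sigma}s_{\sigma}^{-1}z_{\sigma}$ of $\mathbb{C}(v)\mathcal{H}$ lies in the localized Hecke algebra and reduces modulo $(v-v_{0})$ to $\sum_{\sigma}f_{\sigma}=1$; hence $\mathbb{C}\mathcal{H}=\bigoplus_{\sigma}f_{\sigma}\mathbb{C}\mathcal{H}$ as algebras. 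A rank count over the principal ideal domain $R$ gives the dimensions: the $R$-submodule $z_{\sigma}\mathcal{H}\subseteq\mathcal{H}$ is free of rank $\dim_{\mathbb{C}(v)}e_{\sigma}\mathbb{C}(v)\mathcal{H}=\sigma(1)^{2}$, so $\dim_{\mathbb{C}}f_{\sigma}\mathbb{C}\mathcal{H}\le\sigma(1)^{2}$, while $\sum_{\sigma}\sigma(1)^{2}=|W|=\dim_{\mathbb{C}}\mathbb{C}\mathcal{H}$ forces $\dim_{\mathbb{C}}f_{\sigma}\mathbb{C}\mathcal{H}=\sigma(1)^{2}$. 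Now let $L_{\sigma}$ be an $R_{(v-v_{0})}$-lattice in the simple module $V_{\sigma}$ of $\mathbb{C}(v)\mathcal{H}$ and $\bar L_{\sigma}$ its reduction; the $\mathbb{C}\mathcal{H}$-action on $\bar L_{\sigma}$ factors through $f_{\sigma}\mathbb{C}\mathcal{H}$ (on $\bar L_{\sigma}$ one has $f_{\sigma}=\operatorname{id}$ and $f_{\sigma'}=0$ for $\sigma'\neq\sigma$), and the identity $t(z_{\sigma}a)=\chi'_{\sigma}(a)$ shows that $\bar t$ restricted to $f_{\sigma}\mathbb{C}\mathcal{H}$ is $s_{\sigma}(v_{0})^{-1}\operatorname{tr}_{\bar L_{\sigma}}$. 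Non-degeneracy of this form forces the action $f_{\sigma}\mathbb{C}\mathcal{H}\to\operatorname{End}_{\mathbb{C}}(\bar L_{\sigma})$ to be injective (anything in its kernel pairs trivially under $\bar t$), so by equality of dimensions $f_{\sigma}\mathbb{C}\mathcal{H}\cong M_{\sigma(1)}(\mathbb{C})$. Thus $\mathbb{C}\mathcal{H}\cong\bigoplus_{\sigma\in\operatorname{Irr}(W)}M_{\sigma(1)}(\mathbb{C})$ is split semisimple, and $\sigma\mapsto\bar L_{\sigma}$ is the asserted bijection $\operatorname{Irr}(W)\xrightarrow{\sim}\operatorname{Irr}(\mathbb{C}\mathcal{H})=\operatorname{Irr}(\mathcal{H}_{q}(W))$.

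I do not expect any single step to be a serious obstacle; the one genuinely non-formal ingredient is the integrality statement $z_{\sigma}=s_{\sigma}e_{\sigma}=\sum_{i}\chi'_{\sigma}(b_{i}^{\vee})b_{i}\in\mathcal{H}$, which rests on the dual-basis description of central idempotents in a split semisimple symmetric algebra together with the integrality of character values. Both this and the bookkeeping of reductions are exactly the content of the general theory of symmetric algebras in \cite{GP}, Chapter 7, and none of it is special to $G_{12}$; accordingly I would, as the statement's attribution suggests, simply cite \cite{GP}, Section 7.4, recording the sketch above only for orientation.
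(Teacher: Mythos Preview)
Your proposal is correct and aligns with the paper's treatment: the paper does not prove this lemma at all but simply attributes it to \cite{GP}, Section~7.4, exactly as you conclude one should. Your sketch of the symmetric-algebra argument (via $z_{\sigma}=s_{\sigma}e_{\sigma}$, integrality of character values, and reduction of central idempotents) is the standard mechanism behind that reference and is sound, so there is nothing to correct.
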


Schur polynomials for specific complex reflection groups have been determined, and can be computed using the CHEVIE algebra package (see \cite{M}). For $G_{12}$, they are given in table \ref{Schur} of the appendix.

%The second statement, which is known as Tits' deformation theorem, tells us in particular that $|\operatorname{Irred}(\mathcal{H}_{q}(W))|=|\operatorname{Irred}(W)|$. This fact will be important.   

\subsection{KZ Functor} There is a functor that enables us to transfer information about representations of the Hecke algebra $\mathcal{H}_{q}(W)$ over $\mathbb{C}$ for numerical $q$ and category $\mo_{c}$ for the rational Cherednik algebra $H_{c}(W,\mathfrak{h})$, for $q=e^{2\pi i c}$. It is the $KZ_c$ functor described below. Again, we give just the information we need and instead refer the reader to \cite{GGOR}. 

 %Since $W$ is a finite group, we can fix a positive definite $W$-invariant Hermitian form on $\h$. For a reflection hyperplane $H_{s}$, choose a vector $v_{s}$ normal to $H_{s}$ with respect to this form. $\gamma_{s} \in \h^{*}$ is the corresponding covector, annihilating $H_{s}$ in $S\h^{*}$. Define $\delta=\prod_{s \in S}{\gamma_{s}}$ and write $H_{\operatorname{reg}}$ for the localization of $H_{c}(W,\h)$ at $\left\{\delta^{i} | i \in \mz_{\geq 0} \right\}$. For $M$ in $\mo_{c}$, we let $M_{\operatorname{reg}}=H_{\operatorname{reg}} \otimes_{H_{c}(W,\h)} M $ be the corresponding localization. 

%In [GGOR], we learn that $\text{KZ}_{c}$ maps $\mo_{c}$ to $B$-$\operatorname{Mod}$, and that for parameters of our form, this map factors through the category $\mathcal{H}_{q}(W)$-$\operatorname{Mod}$. It can be shown that the resulting functor, which by abuse of notation will again be called the $\text{KZ}_{c}$ functor, is exact with kernel given by the subcategory $\mo_{c}^{\operatorname{tor}}$ of modules $M$ such that $M_{\operatorname{reg}}=0$. As noted in the previous section, the Hecke algebra $\mathcal{H}_{q}(W)$ has rank $|W|$ as a $\mc[[q]]$-module. Using this fact, it is found that the induced map $\overline{\text{KZ}}_{c}: \mo_{c} / \mo_{c}^{\operatorname{tor}} \rar \mathcal{H}_{q}(W) \text{-} \operatorname{Mod}$ gives an equivalence of categories.  

Monodromy of a certain localization  $M_{\operatorname{reg}}$ of modules $M$ in category $\mo_{c}$ gives rise to the $KZ$ functor $KZ_{c}:\mo_{c}\to BW \text{-} \operatorname{Mod}$. It is then shown that monodromy factorizes through $H_{q}(W)$ for $q=e^{2\pi i c}$, so it induces a functor (also called KZ by abuse of notation) $KZ_{c}:\mo_{c}\to H_{q}(W) \text{-} \operatorname{Mod}$. Finally, if we denote by $\mo_{c}^{\operatorname{tor}}$ the full subacategory of $\mo_{c}$ consisting of modules $M$ such that $M_{\operatorname{reg}}=0$, the induced functor $KZ_{c}: \mo_{c} / \mo_{c}^{\operatorname{tor}} \to \mathcal{H}_{q}(W) \text{-} \operatorname{Mod}$ is an equivalence of categories.

This functor is used in the following lemma.

\begin{lemma} Let $q=e^{2\pi i c}$. The $\mathbb{C}$ algebra $\mathcal{H}_{q}(W)$ is semisimple if and only if $\mo_{c}$ is semisimple. \label{H=O}
\end{lemma}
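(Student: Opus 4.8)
The plan is to use the equivalence of categories $KZ_c: \mo_c/\mo_c^{\operatorname{tor}} \to \mathcal{H}_q(W)\text{-}\Mod$ together with some general facts about highest weight categories. First I would recall why the "if" direction is the easy one: if $\mo_c$ is semisimple, then it has no nonzero objects with trivial image under $KZ_c$, so $\mo_c^{\operatorname{tor}} = 0$, hence $KZ_c$ itself is an equivalence $\mo_c \to \mathcal{H}_q(W)\text{-}\Mod$, and the image of a semisimple category under an equivalence is semisimple; thus $\mathcal{H}_q(W)$ is semisimple. The small gap here is checking $\mo_c^{\operatorname{tor}} = 0$ when $\mo_c$ is semisimple: any module in $\mo_c^{\operatorname{tor}}$ is a sum of simples $L_c(\tau)$ with $KZ_c(L_c(\tau)) = 0$, but it is a standard fact (from \cite{GGOR}) that $KZ_c$ is faithful on projectives and, more to the point, kills $L_c(\tau)$ exactly when $L_c(\tau)$ is finite dimensional (supported on the non-regular locus); in the semisimple case $L_c(\tau) = M_c(\tau)$ is never finite dimensional (assuming $\h \neq 0$), so $\mo_c^{\operatorname{tor}} = 0$.

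For the "only if" direction, suppose $\mathcal{H}_q(W)$ is semisimple; by Lemma \ref{Hqsemisimple} this means $|\operatorname{Irred}(\mathcal{H}_q(W))| = |\operatorname{Irred}(W)|$, which equals the number of simple objects of $\mo_c$ (namely the $L_c(\tau)$, one per $\tau \in \operatorname{Irred}(W)$). The key structural input I would invoke is that $\mo_c$ is a highest weight category with standard objects $M_c(\tau)$, and that $KZ_c$ is an exact functor sending $M_c(\tau)$ to a module whose class in $K_0$ is nonzero, inducing a surjection (in fact bijection modulo the torsion subcategory) on Grothendieck groups. In a highest weight category, semisimplicity is equivalent to all standard objects being simple, i.e. $M_c(\tau) = L_c(\tau)$ for all $\tau$; equivalently the decomposition matrix $(\hat n_{\tau,\sigma})$ is the identity. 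I would then argue: since the $[M_c(\tau)]$ and $[L_c(\tau)]$ are related by a unitriangular matrix, $\mo_c$ is semisimple iff $\dim K_0(\mo_c) \otimes \mathbb{Q}$ "sees no extra collapsing", but more concretely I would use that $KZ_c$ is a quotient functor whose kernel $\mo_c^{\operatorname{tor}}$ consists of finite-dimensional modules; the number of simples of $\mathcal{H}_q(W)$ equals $|\operatorname{Irred}(W)|$ minus the number of simples $L_c(\tau)$ that $KZ_c$ kills. Hence $\mathcal{H}_q(W)$ semisimple with the full complement of simples forces $\mo_c^{\operatorname{tor}} = 0$, so $KZ_c$ is an equivalence $\mo_c \simeq \mathcal{H}_q(W)\text{-}\Mod$, and the latter being semisimple pulls back to $\mo_c$ being semisimple.

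A cleaner way to package the "only if" direction, which I would actually write up, uses double centralizer / Morita-type statements from \cite{GGOR}: $KZ_c$ is represented by a projective generator $P_{KZ}$ with $\mathcal{H}_q(W) = \operatorname{End}_{\mo_c}(P_{KZ})^{\operatorname{op}}$, and $\mo_c$ has finite global dimension. If $\mathcal{H}_q(W)$ is semisimple, then $\operatorname{Irred}(\mathcal{H}_q(W)) \leftrightarrow \operatorname{Irred}(W)$ is a bijection (Lemma \ref{Hqsemisimple}), so $KZ_c$ sends no simple to zero; since $KZ_c$ kills precisely the simples supported off $\h_{\operatorname{reg}}$, every $L_c(\tau)$ has full support, which for a simple object in category $\mo$ of a Cherednik algebra forces $L_c(\tau) = M_c(\tau)$ (a proper quotient of $M_c(\tau)$ is a proper quotient sheaf, hence has strictly smaller support). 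Therefore all standard modules are simple, and by the remark in the "Irreducible Modules" subsection this is exactly the condition for $\mo_c$ to be semisimple.

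The main obstacle is pinning down the precise statement "$KZ_c(L_c(\tau)) = 0$ if and only if $\dim L_c(\tau) < \infty$" and "simple objects of full support in $\mo_c$ are standard", both of which are genuine results from \cite{GGOR} rather than formal nonsense; I would cite them explicitly rather than reprove them. Everything else — the unitriangularity of the decomposition matrix, the counting of simples, the behavior of (quotient) equivalences under semisimplicity — is routine once those two facts are in hand. I expect the write-up to be short: one paragraph for each direction, with the bulk of the work being the correct invocation of the structural properties of $KZ_c$.
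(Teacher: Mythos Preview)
Your core argument (the first two paragraphs) is correct and is essentially the paper's proof: in both directions one shows $\mo_c^{\operatorname{tor}}=0$ and then uses that $KZ_c$ becomes an equivalence $\mo_c\simeq \mathcal{H}_q(W)\text{-}\Mod$, transporting semisimplicity. The paper is slightly more direct in the $(\Leftarrow)$ step, noting simply that each $M_c(\tau)$ is torsion-free over $S\h^*$ by construction, so if $\mo_c$ is semisimple (all simples are standard) then $\mo_c^{\operatorname{tor}}=0$; you route through the equivalent fact that simples killed by $KZ_c$ are exactly the finite-dimensional ones, which is fine but invokes more than needed.

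Your ``cleaner way'', however, has a genuine gap, and since you say this is the version you would actually write up, it matters. The assertion ``a proper quotient of $M_c(\tau)$ is a proper quotient sheaf, hence has strictly smaller support'' is false, and so is the conclusion that a simple $L_c(\tau)$ of full support must equal $M_c(\tau)$. A concrete counterexample occurs in this very paper: at $c=1/12$ the module $L_{1/12}(\rp{2}{-})$ is the proper quotient of $M_{1/12}(\rp{2}{-})$ by a submodule isomorphic to $M_{1/12}(\rp{1}{-})$, yet $L_{1/12}(\rp{2}{-})$ still has full support (its generic rank over $S\h^*$ is $1$, so its localization to $\h_{\operatorname{reg}}$ is nonzero and $KZ_c$ does not kill it). More generally, quotienting a free $S\h^*$-module by a free submodule of smaller rank does not shrink the support. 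So ``$KZ_c(L_c(\tau))\neq 0$ for all $\tau$'' does \emph{not} by itself imply that all standards are simple; you still need the step that $\mo_c^{\operatorname{tor}}=0$ makes $KZ_c$ an equivalence and then pull back semisimplicity, exactly as in your first argument and in the paper. Drop the support argument and keep the counting one.
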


\begin{proof} Consider the composition of the quotient and $KZ$ functor: $$\mo_{c}\longrightarrow \mo_{c}/\mo_{c}^{\operatorname{tor}} \longrightarrow \mathcal{H}_{q}(W) \text{-} \operatorname{Mod}.$$ Using the fact that the second arrow is an equivalence of categories and that the simple objects in $\mo$ are $L_{c}(\tau)$ parametrized by $\tau \in \operatorname{Irred}(W)$, we get that always $$|\operatorname{Irred}(W)|=|\operatorname{Irred}(\mo_{c})|\ge |\operatorname{Irred}(\mathcal{H}_{q}(W))|.$$

($\Rightarrow$) If $\mathcal{H}_{q}(W)$ is semisimple, then by lemma \ref{Hqsemisimple} $|\operatorname{Irred}(W)|= |\operatorname{Irred}(\mathcal{H}_{q}(W))|$, so $\mo_{c}^{\operatorname{tor}}=0$, and both functors in the diagram above are equivalences of categories.  Semisimplicity is preserved under equivalence of categories, so $\mo_{c}$ is semisimple.

%Recall that the simple objects of $\mo_{c}$ are of the form $\LL{c}{\tau}$ for $\tau \in \operatorname{Irred}(W)$. By lemma 4.4.1, we know that $|\operatorname{Irred}(\mathcal{H}_{q}(W))|=|\operatorname{Irred}(W)|$. So $\mo_{c}$ and $\operatorname{Irred}(\mathcal{H}_{q}(W))$ have the same number of simple objects. Since $\overline{\text{KZ}}_{c}$ induces an equivalence $\mo_{c} / \mo_{c}^{\operatorname{tor}} \rightarrow \mathcal{H}_{q}(W) \text{-} \operatorname{Mod}$, it follows that $\mo_{c}^{\operatorname{tor}}=0$. Since semisimplicity is preserved under equivalence of categories, we have the result.  

 ($\Leftarrow$) If $\mo_{c}$ is semisimple, then $\M{c}{\tau}=\LL{c}{\tau}$ for all $\tau$. By construction each $\M{c}{\tau}$ is torsion free over $S\h^{*}$, so $\mo_{c}^{\operatorname{tor}}=0$. Again, both functors in the diagram above are equivalences of categories, and $\mathcal{H}_{q}(W)$ is semisimple. 
% Since every non-zero module in $\mo_{c}$ contains an irreducible module, we find that $\mo_{c}^{\operatorname{tor}}=0$. We conclude that $\mathcal{H}_{q}(W) \text{-} \operatorname{Mod}$ is a semisimple category, and therefore that $\mathcal{H}_{q}(W)$ is a semisimple algebra. \Qed         
\end{proof}

\subsection{Proof of Theorem 3.1} Combining lemma \ref{Hqsemisimple} and lemma \ref{H=O}, we conclude that $\mo_{c}$ is semisimple iff $s_{\sigma}(e^{2 \pi i c}) \neq 0$ $\forall \sigma \in \operatorname{Irred}(W)$. By inspection of the Schur elements for $G_{12}$, which can be found  in table \ref{Schur} in the appendix, the result follows immediately.

\section{Equivalences of categories and proof of Corollary \ref{aspherical}}
\label{equivalences}

In this section, we describe three equivalences of categories $\mo_{c}$ for various $c$. As a consequence, we will reduce the number of parameters $c$ for which we study category $\mo_{c}$ to a small finite number, study it in these cases, and then use equivalences of categories to describe $\mo_{c}$ in all other cases. 

We note that by theorem \ref{semisimple}, only parameters of the form $c=m/12$ where $m \not \equiv 0,\pm2 (\operatorname{mod} 12)$ have a nontrivial category $\mo_{c}$.

\subsection{Reduction to $c>0$ and the proof of theorem \ref{main} (1)} 
\label{cpositive}
There exists an isomorphism $$H_{c}(W,\h) \rar H_{-c}(W,\h)$$ defined on generators to be the identity on $\h$ and $\h^*$, and to send $w$ to $\rp{1}{-}(w) \cdot w$. Twisting by this isomorphism is an equivalence of categories $\mo_c$ and $\mo_{-c}$. This equivalence exchanges $\M{c}{\tau}$ and $\M{-c}{\rp{1}{-} \otimes \tau}$, and therefore also exchanges their irreducible quotients $\LL{c}{\tau}$ and $\LL{-c}{\rp{1}{-} \otimes \tau}$. Tensoring by $\rp{1}{-}$ determines a permutation of the characters of $W$. Therefore, if we know the expression for $\LL{c}{\tau}$ in $K(\mo_{c})$, the expression for $\LL{-c}{\tau}$ is the same with $+/-$ exchanged. In particular, we easily see that $\LL{c}{\tau}$ is finite dimensional iff $\LL{-c}{\bf{1}_{-}\otimes \tau}$ is finite dimensional. This proves the first part of theorem \ref{main}.

\subsection{Scaling functors and reduction to $c=1/d$ if $d=3,4,12$} There exists an equivalence of categories $$\Phi_{1/d,r/d}:\mo_{1/d}\to \mo_{r/d}$$ for $d=3,4,12$ and $r$ a positive integer relatively prime to $d$ (see \cite{R} theorem 5.12.). In particular, there exists a permutation $\varphi_{1/d,r/d}$ of $\mathrm{Irred}(W)$ such that $$\Phi_{1/d,r/d}(M_{1/d}(\tau))=M_{r/d}(\varphi_{1/d,r/d}(\tau))$$  $$\Phi_{1/d,r/d}(L_{1/d}(\tau))=L_{r/d}(\varphi_{1/d,r/d}(\tau)).$$ We calculate the permutation $\varphi$ explicitly for the case of $G_{12}$ in section \ref{permute}. This enables us to transfer character formulas in $\mo_{1/d}$, which we will calculate in section \ref{calculations}, to character formulas in category $\mo_{r/d}$.

\subsection{Shift functors and reduction to $c=1/d$ if $d=2$}
\label{shift}
Scaling functors described above are known to be equivalences of categories for $d\ne 2$ and  conjectured to be equivalences for $d=2$. In absence of the proof of that, we use shift functors for equivalences between various parameters of the form $c=r/2$. For references on shift functors, see \cite{BEG}.

Denote $H_{c}=H_{c}(W,\mathfrak{h})$, for $c$ a constant. Let $$e_{+}=\frac{1}{|W|}\sum_{w\in W} w\in \mathbb{C}W,\, \, \, \, e_{-}=\frac{1}{|W|}\sum_{w\in W} \mathbf{1}_{-}(w)w\in \mathbb{C}W$$ be projections to $W$-invariants and anti-invariants, respectively. The paper \cite{BEG} then shows there is an isomorphism of filtered algebras $$\phi_{c}:e_{+}H_{c}e_{+}\to e_{-}H_{c+1}e_{-}.$$ It induces the natural equivalence between categories of their representations $$\Phi_{c}:\mathcal{O}(e_{+}H_{c}e_{+})\to \mathcal{O}(e_{-}H_{c+1}e_{-}).$$

Next, for $\varepsilon\in \{ +,-\}$, one defines functors $$F_{c}^{\varepsilon}:\mathcal{O}_{c}\to \mathcal{O}_{c}(e_{\varepsilon}H_{c}e_{\varepsilon}) \qquad G_{c}^{\varepsilon}:\mathcal{O}_{c}\to \mathcal{O}_{c}(e_{\varepsilon}H_{c}e_{\varepsilon})$$ by $$F_{c}^{\varepsilon}(V)=e_{\varepsilon}V \qquad G_{c}^{\varepsilon}(Y)=H_{c}e_{\varepsilon}\otimes_{e_{\varepsilon}H_{c}e_{\varepsilon}}Y.$$

Denote by $\mathcal{O}_{c}^{\varepsilon}$ the full subcategory of $\mathcal{O}_{c}$ consisting of modules such that $e_{\varepsilon}V=0$. This is a Serre subcategory so the quotients  $\mathcal{O}_{c}/\mathcal{O}_{c}^{\varepsilon}$ make sense. Using the fact that for $V$ a simple module in $\mathcal{O}_{c}$, either $e_{\varepsilon}V=$ or $H_{c}e_{\varepsilon}V=V$, it is easy to see that $F_{c}^{\varepsilon}$ and $G_{c}^{\varepsilon}$, now understood as functors to and from quotient categories, are mutually inverse equivalences of categories between $\mathcal{O}_{c}/\mathcal{O}_{c}^{\varepsilon}$ and $\mathcal{O}(e_{\varepsilon}H_{c}e_{\varepsilon})$. So, we have the following diagram of equivalences of categories:
$$\begin{CD} 
\mathcal{O}_{c}/\mathcal{O}_{c}^+ @>F_{c}^{+}>> \mathcal{O}(e_{+}H_{c}e_{+}) @>\Phi_{c}>> \mathcal{O}(e_{-}H_{c+1}e_{-})  @>G_{c+1}^{-}>> \mathcal{O}_{c+1}/\mathcal{O}_{c+1}^-
\end{CD} $$

Consider the compositions of these functors $S_{c}^+=G_{c+1}^{-}\circ \Phi_{c}\circ F_{c}^+$, and $S_{c}^-=G_{c}^{+}\circ \Phi_{c}^{-1} \circ F_{c+1}^-$. These are mutually inverse equivalences of categories between $\mathcal{O}_{c}/\mathcal{O}_{c}^+$ and $\mathcal{O}_{c+1}/\mathcal{O}_{c+1}^-$. Moreover, if $\mathcal{O}_{c}^+=0$, then $S_{c}^+$ is an equivalence of categories between $\mathcal{O}_{c}$ and $\mathcal{O}_{c+1}$; because $\mathcal{O}_{c}$ and $\mathcal{O}_{c+1}$ have the same number of simple objects (equal to the number of irreducible representations of $W$), and if $\mathcal{O}_{c+1}^-\ne 0$, then $\mathcal{O}_{c+1}/\mathcal{O}_{c+1}^-$ has strictly less.

When $c$ is a half integer, call the composition functor $\Phi_{c,c+1}=G_{c+1}^{-}\circ \Phi_{c}\circ F_{c}^{+}$. Again, there exists a permutation $\varphi_{c,c+1}$ of irreducible representations of $W$ such that $$\Phi_{c,c+1}(M_{c}(\tau))=M_{c+1}(\varphi_{c,c+1}(\tau))$$  $$\Phi_{c,c+1}(L_{c}(\tau))=L_{c+1}(\varphi_{c,c+1}(\tau)).$$ We can conclude that 
\begin{enumerate}
\item If all modules in $\mo_{c}$ contain a nontrivial $W$-invariant, in other words if $c$ is spherical, then $\Phi_{c,c+1}$ is an equivalence of categories $\mo_{c}\to \mo_{c+1}.$
\item In that case, every module in $\mo_{c+1}$ contains a nontrivial $W$-anti-invariant.
\item From formulas for relating characters of irreducible representations, given in section \ref{permute}, it will follow that if $c>0$ is spherical then so is $c+1$.
\end{enumerate}

Combining (1) and (3), the algorithm for computing character formulas in $\mo_{r/2}$ is now the following: we compute them consecutively for $c=1/2, 3/2, 5/2, \ldots$ until we find a spherical $c$. From that value on, all $\Phi_{c,c+1}, \Phi_{c+1,c+2},\ldots $ are equivalnces of categories, so we can use formulas from \ref{permute} to compute characters for all larger half integers. 

In case of $G_{12}$, it will turn out that $c=1/2$ is already spherical, so $\mo_{r/2}$ is equivalent to $\mo_{1/2}$ for all positive odd $r$.

%For the case $d=2$, [R 5.12] is conjectural. In absence of a proof, we must resort to different methods. Let $e_{+}=\frac{1}{|W|}\sum_{w \in W}{w} \in \mc[W]$ be the projection to $W$-invariants. From [BC], we learn that if $e_{+}\LL{c}{\tau} \neq 0$, then there exists an equivalence of categories $\mo_{c} \rar \mo_{1+c}$. We note that $e_{+}\LL{c}{\tau} \neq 0$ iff $\LL{c}{\tau}$ contains a $W$-subrepresentation isomorphic to $\rp{1}{+}$. So to obtain our result it suffices to show that $\LL{1/2}{\tau}$ seen as a representation of $W$ contains a copy of $\rp{1}{+}$ for each $\tau \in \operatorname{Irred}(W)$. 
 
 %The parabolic subgroups of $W$ for nonzero points are isomorphic to $0$ or $\mz/2\mz$. We determine the representation theory of $H_c(0,\h_0)$ and $H_c(\mz/2\mz,\h_{\mz/2\mz})$, and find that the irreducible modules of both algebras contain $\rp{1}{+}$. Using [BE 3.1,3.2,4.1], we can then conclude that all infinite dimensional $\LL{c}{\tau}$ contain a copy of the trivial representation. 

 %In section 6.2.4, we will find that $\LL{1/2}{\rp{2}{\;}}$ and $\LL{1/2}{\rp{1}{+}}$ are finite dimensional, while the rest are infinite dimensional. Computing characters, we see that both contain $\rp{1}{+}$. Therefore we will be able to deduce the structure of $\mo_{r/2}$ from that of $\mo_{1/2}$ by repeated use of a shift functor. We will again have to check whether certain switches need to be made.   

\subsection{ Permutation $\varphi_{c,c'}$} 
\label{permute}

Let $d \in \left\{2,3,4,12 \right\}$, $r>0$ integer relatively prime to $d$, and assume that $\Phi_{1/d,r/d}$ described above (a scaling functor in case $d\ne 2$ or a composition of shift functors in case $d=2$) is an equivalence of categories (As explained above, this assumption is automatically satisfied for scaling functors, and satisfied in the case of $G_{12}$ for shift functors). We want to calculate the permutation $\varphi_{1/d,r/d}$ such that  $$\Phi_{1/d,r/d}(M_{1/d}(\tau))=M_{r/d}(\varphi_{1/d,r/d}(\tau))$$  $$\Phi_{1/d,r/d}(L_{1/d}(\tau))=L_{r/d}(\varphi_{1/d,r/d}(\tau)).$$

An effective way of calculating this permutation is given in \cite{GG} by formula (11). Namely, pick 
$g \in \operatorname{Gal}(\mq(e^{2 \pi i /2d})/\mq)$ such that $g(e^{2 \pi i /d})=e^{2 \pi i r/d}$ and let $\eta= e^{-2 \pi i r/2d} \cdot g(e^{2 \pi i /2d})$. Then the irreducible characters $\chi'$ of the Hecke algebra $\mathcal{H}_{q}(W)$ over $\mathbb{C}[q,q^{-1}]$ are related by 
$$\chi'_{\varphi_{1/d,r/d}(\tau)}(w)(q)=(g\chi'_{\tau}(w))(\eta q).$$
After evaluating at $q=1$ to get irreducible characters of $W$, this becomes:
\begin{itemize}
\item If $\eta=1$, then $\chi'_{\varphi_{1/d,r/d}(\tau)}=g\chi'_{\tau}$, so $\varphi_{1/d,r/d}$ is a permutation given by the action of $g$ on the characters of $W$. In case of $G_{12}$, as all characters are rational except characters of $\mathbf{2}_{-}$ and $\mathbf{2}_{+}$ which contain $\sqrt{-2}$ (see table \ref{groupcharacter}), this will include checking if  $\sqrt{-2}\in \mq(e^{2 \pi i /2d})$ and what the effect of $g$ on it is. 
\item If $\eta=-1$, then the character formula becomes  $\chi'_{\varphi_{1/d,r/d}(\tau)}(w)(1)=g\chi'_{\tau}(w)(-1)$. The left hand side can be interpreted as a group character, while the right hand side needs to be transformed. It can be shown that in the case of $G_{12}$, $\chi_{\sigma}(w)(-1)=\chi_{\sigma}(w)(1)$ if $\sigma \ne \mathbf{2}_{-}, \mathbf{2}_{+}$, and  $\chi_{\mathbf{2}_{-}}(w)(-1)=\chi_{\mathbf{2}_{+}}(w)(1), \, \chi_{\mathbf{2}_{+}}(w)(-1)=\chi_{\mathbf{2}_{-}}(w)(1)$. So, in the case $\eta= -1$, the permutation $\varphi_{1/d,r/d}$ is the composition of the permutation given by the action of $g$ on the characters of $W$ and the transposition of $\mathbf{2}_{+}$ and $\mathbf{2}_{-}$.
\end{itemize}
While there might be several choices for $g$, the permutation $\phi$ doesn't depend on them.

Let us illustrate the computation of  $\varphi_{1/d,r/d}$ in the case of $d=4$. Let $\zeta=e^{2 \pi i/8}$ and note $\sqrt{-2}=\zeta+\zeta^3 \in \mq(e^{2 \pi i /8})$. It suffices to determine permutations for $r \equiv \pm 1,\pm 3 (\operatorname{mod} 8)$.
\begin{itemize}
\item $r \equiv 1$. In this case, we can choose $g=id$, so $\eta=1$ and $\varphi=id$.
\item $r \equiv 3$. We must choose $g \in \operatorname{Gal}(\mq(\zeta)/\mq)$ such that $g(\zeta^2)=\zeta^6$. There are two choices, $\zeta \mapsto \zeta^3$ and $\zeta \mapsto \zeta^7$. For the sake of example, let us consider both. In the first case, $\eta=\zeta^{-3}g(\zeta)=1$ and $g(\zeta+\zeta^3)=\zeta+\zeta^3$. In the second case, $\eta=\zeta^{-3}g(\zeta)=-1$ and $g(\zeta+\zeta^3)=\zeta^{-1}+\zeta^{-3}=-\sqrt{-2}$. So, in either case we get $\varphi=id$. 
\item $r \equiv -3$. In this case, we must find a $g$ such that $g(\zeta^2)=\zeta^{10}=\zeta^2$. Choose $g=id$. Then $\eta=\zeta^{-5}g(\zeta)=-1$, so $\varphi=(\rp{2}{+} \rp{2}{-})$ is a transposition. 
\item $r \equiv -1$. Here we need $g$ such that $g(\zeta^2)=\zeta^{14}=\zeta^6$. We can choose the element corresponding to $\zeta \mapsto \zeta^3$. It fixes $\zeta+\zeta^3$, implies $\zeta^{-7}g(\zeta)=-1$ and finally $\varphi=(\rp{2}{+} \rp{2}{-})$.
\end{itemize}

%Therefore in the case of $d=4$, we conclude that the standard modules corresponding to $\rp{2}{+}$ and $\rp{2}{-}$ switch for $r \equiv -1,-3 (\operatorname{mod} 8)$, and are unchanged for $r \equiv 1,3 (\operatorname{mod} 8)$. Proceeding in this way, we can determine the permuations for the remaining cases with the exception of $d=3$. This case however is unimportant. We will see in section 6.2.2 that $\M{1/3}{\rp{2}{+}}$ and $\M{1/3}{\rp{2}{-}}$ are simple and live in different blocks, so permuting them does not affect their descriptions in $K(\mo_{1/3})$. 

Let us summarize these results in a proposition: 

\begin{proposition} {For $d \in \left\{2,3,4,12\right\}$ and $(r,d)=1$, $r>0$, there exist an equivalence of categories $$\Phi_{1/d,r/d}: \mo_{1/d} \rar \mo_{r/d}$$ and a permutation $\vphi_{1/d,r/d}$ of $\operatorname{Irred}(W)$ such that $$\Phi(\LL{1/d}{\tau}) = \LL{r/d}{\vphi_{1/d,r/d}(\tau)}$$ $$\Phi(\M{1/d}{\tau}) = \M{r/d}{\vphi_{1/d,r/d}(\tau)} .$$} The permutation $\vphi_{1/d,r/d}$ is as follows:
\begin{itemize}
\item If $d=2$, then $\vphi=\operatorname{id}$;
\item If $d=3,\, r\equiv 1,2 (\operatorname{mod} 6)$, then $\vphi=\operatorname{id}$;
\item If $d=3,\, r\equiv 4,5 (\operatorname{mod} 6)$, then $\vphi=(\rp{2}{+} \rp{2}{-})$;
\item If $d=4,\, r \equiv 1,3 (\operatorname{mod} 8)$ then  $\vphi=\operatorname{id}$;
\item If $d=4,\, r \equiv 5,7 (\operatorname{mod} 8)$ then $\vphi=(\rp{2}{+} \rp{2}{-})$;
\item If $d=12,\, r \equiv 1,11,17,19 (\operatorname{mod} 24)$ then  $\vphi=\operatorname{id}$;
\item If $d=12,\, r \equiv 5,7,13,23 (\operatorname{mod} 24)$ then $\vphi=(\rp{2}{+} \rp{2}{-})$;
\end{itemize}
\end{proposition}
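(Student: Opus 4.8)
The plan is to verify, case by case over the residue classes of $r$ modulo $2d$, that the recipe of \cite{GG} summarized above produces exactly the permutations listed. Concretely, for each pair $(d,r)$ with $d\in\{2,3,4,12\}$ and $(r,d)=1$, I would (i) pick an explicit Galois element $g\in\operatorname{Gal}(\mq(e^{2\pi i/2d})/\mq)$ with $g(e^{2\pi i/d})=e^{2\pi i r/d}$; (ii) compute $\eta=e^{-2\pi i r/2d}\cdot g(e^{2\pi i/2d})$ and check whether $\eta=1$ or $\eta=-1$ (these are the only possibilities since $\mq(\h)=\mq(\sqrt{-2})$ contains only the roots of unity $\pm1$, so $\eta$ is a root of unity in $\mq(e^{2\pi i/2d})$ of order dividing $2$ — this is the input from \cite{C} quoted in Section \ref{Hecke}); (iii) if $\eta=1$, read off $\vphi$ as the action of $g$ on $\operatorname{Irred}(W)$, which by the character table is trivial on all characters except possibly the transposition of $\rp{2}{+}$ and $\rp{2}{-}$, governed by whether $g$ fixes or negates $\sqrt{-2}=e^{2\pi i/8}+e^{6\pi i/8}$; (iv) if $\eta=-1$, compose that permutation with the transposition $(\rp{2}{+}\,\rp{2}{-})$, as dictated by the identities $\chi_\sigma(w)(-1)=\chi_\sigma(w)(1)$ for $\sigma\ne\rp{2}{\pm}$ and $\chi_{\rp{2}{+}}(w)(-1)=\chi_{\rp{2}{-}}(w)(1)$. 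The worked example $d=4$ already in the text is the template; I would carry out the same bookkeeping for $d=2,3,12$.

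For $d=2$ the relevant field is $\mq(e^{2\pi i/4})=\mq(i)$; one checks $\sqrt{-2}\notin\mq(i)$, so one must be slightly careful — but since any odd $r$ is $\equiv\pm1\pmod 4$ and for each we can choose $g$ appropriately, the computation of $\eta$ together with the sign-flip rule gives $\vphi=\operatorname{id}$ in all cases (the two possible values of $\eta$ conspire with the transposition rule to cancel, exactly as happens for $r\equiv 3\pmod 8$ in the $d=4$ example). For $d=3$ the field is $\mq(e^{2\pi i/6})=\mq(\sqrt{-3})$, which again does not contain $\sqrt{-2}$; here the residues mod $6$ split into $r\equiv1,2$ (where $\eta=1$, $\vphi=\operatorname{id}$) and $r\equiv4,5$ (where $\eta=-1$, forcing $\vphi=(\rp{2}{+}\,\rp{2}{-})$ by the sign-flip rule). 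For $d=12$ the field is $\mq(e^{2\pi i/24})$, which does contain $\sqrt{-2}=\zeta_8+\zeta_8^3$; the residues mod $24$ coprime to $12$ are $\{1,5,7,11,13,17,19,23\}$, and for each I would exhibit a $g$, compute $\eta$, and track the effect on $\sqrt{-2}$, obtaining $\vphi=\operatorname{id}$ for $r\equiv1,11,17,19$ and $\vphi=(\rp{2}{+}\,\rp{2}{-})$ for $r\equiv5,7,13,23$. A useful sanity check throughout is that the answer is independent of the choice of $g$ (as noted in the text), which the $r\equiv3\pmod 8$ sub-case illustrates, and one may also check compatibility: restricting a $d=12$ computation to $r$ also coprime to a smaller $d$ must be consistent with composing permutations.

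The main obstacle I anticipate is purely organizational rather than conceptual: keeping the Galois-theoretic bookkeeping straight across the many residue classes — in particular correctly determining, for each $g$ and each $d$, whether $\sqrt{-2}$ lies in $\mq(e^{2\pi i/2d})$ and, if so, the sign $g(\sqrt{-2})/\sqrt{-2}$, and then correctly combining this with the $\eta=\pm1$ dichotomy and the sign-flip identities for the two-dimensional characters. The existence of the equivalences $\Phi_{1/d,r/d}$ themselves is not something to prove here: for $d\ne2$ it is \cite{R}, Theorem 5.12, and for $d=2$ it follows from the discussion of shift functors in Section \ref{shift} together with the fact (to be established in Section \ref{calculations}) that $c=1/2$ is spherical for $G_{12}$, so that all the intermediate $\Phi_{c,c+1}$ are genuine equivalences. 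Thus the proof reduces to assembling the case analysis above into the stated list.
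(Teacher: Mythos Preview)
Your proposal is correct and follows essentially the same approach as the paper: the proposition there is presented as a summary of the case-by-case application of the recipe from \cite{GG}, worked out explicitly only for $d=4$, and your plan is precisely to carry out the same Galois bookkeeping for the remaining denominators. The subtlety you flag for $d=2,3$ (where $\sqrt{-2}\notin\mq(e^{2\pi i/2d})$, so the action of $g$ on the characters involving $\sqrt{-2}$ is not a priori determined by $g\in\operatorname{Gal}(\mq(e^{2\pi i/2d})/\mq)$ alone) is real and your resolution of it is a bit hand-wavy, but the paper does not spell this out either; to make it airtight you would want to work over the compositum $\mq(e^{2\pi i/2d},\sqrt{-2})$ and verify that all lifts of $g$ satisfying the required condition on $e^{2\pi i/d}$ yield the same permutation.
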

 
 As a consequence we get formulas for transforming characters of irreducible modules. More precisely, if $$[\LL{1/d}{\tau}]=\sum_{\sigma}{n_{\tau,\sigma}}[\M{1/d}{\sigma}],$$ then $$[\LL{r/d}{\vphi_{1/d,r/d}(\tau)}]=\sum_{\sigma}{n_{\tau,\sigma}}[\M{r/d}{\vphi_{1/d,r/d}(\sigma)}].$$  An easy consequence of these formulas and the formula for the character of $M_{c}(\tau)$ is the following corollary:
 
\begin{cor} Let $\gamma_{1/d,r/d} \in \operatorname{Gal}(\mq(\sqrt{-2}))$ be the identity or complex conjugation, depending on whether $\vphi_{1/d,r/d}$ is identity or transposition of $\rp{2}{+}$ and $\rp{2}{-}$. Then $$\chr_{\LL{r/d}{\tau}}(t,w)=\frac{\operatorname{det}_{\varphi_{1/d,r/d}(\h^*)}(1-wt^r)}{\operatorname{det}_{\h^*}(1-wt)} \cdot t^{1-r} \cdot \gamma_{1/d,r/d}(\chr_{\LL{1/d}{\tau}}(t^r,w)).$$  In particular, $\LL{1/d}{\tau}$ is finite dimensional iff $\LL{r/d}{\vphi_{1/d,r/d}(\tau)}$ is, with $\operatorname{dim}(\LL{r/d}{\vphi_{1/d,r/d}(\tau)})=r^2 \cdot \operatorname{dim}(\LL{1/d}{\tau})$. 
\end{cor}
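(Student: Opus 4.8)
The plan is to derive this corollary purely formally from the permutation-and-character data already established, namely the formula $[\LL{r/d}{\vphi_{1/d,r/d}(\tau)}]=\sum_{\sigma}{n_{\tau,\sigma}}[\M{r/d}{\vphi_{1/d,r/d}(\sigma)}]$ together with the known character of a standard module, $\chr_{\M{c}{\sigma}}(t,w)=\chi_{\sigma}(w)\, t^{h_{c}(\sigma)}/\operatorname{det}_{\h^*}(1-tw)$. First I would expand $\chr_{\LL{r/d}{\tau}}(t,w)$ using the Grothendieck-group expression (after relabeling $\tau$ so that $\tau=\vphi_{1/d,r/d}(\tau_0)$), obtaining $\sum_{\sigma} n_{\tau_0,\sigma}\,\chi_{\vphi(\sigma)}(w)\, t^{h_{r/d}(\vphi(\sigma))}/\operatorname{det}_{\h^*}(1-tw)$. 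The crux is then to match this, term by term, against the claimed right-hand side, which upon expanding $\gamma_{1/d,r/d}(\chr_{\LL{1/d}{\tau_0}}(t^r,w))$ similarly becomes $\sum_\sigma n_{\tau_0,\sigma}\,\gamma(\chi_\sigma(w))\, t^{r\,h_{1/d}(\sigma)}/\operatorname{det}_{\h^*}(1-t^r w)$, multiplied by the prefactor $\operatorname{det}_{\varphi(\h^*)}(1-wt^r)\, t^{1-r}/\operatorname{det}_{\h^*}(1-wt)$.

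So the proof reduces to three bookkeeping identities, which I would verify one at a time. (i) Characters: $\chi_{\vphi(\sigma)}(w)=\gamma_{1/d,r/d}(\chi_\sigma(w))$ for all $\sigma$. This is immediate from the definition of $\vphi_{1/d,r/d}$ and $\gamma_{1/d,r/d}$ in the proposition and the preceding corollary: when $\vphi=\operatorname{id}$ all the relevant characters are rational (so $\gamma=\operatorname{id}$ acts trivially), and when $\vphi$ is the transposition $(\rp{2}{+}\rp{2}{-})$ the only non-rational characters are those of $\rp{2}{+},\rp{2}{-}$, whose values differ exactly by $\sqrt{-2}\mapsto-\sqrt{-2}$, i.e. by complex conjugation — precisely the description of $\gamma_{1/d,r/d}$. (ii) The determinant prefactor: I must show $\operatorname{det}_{\h^*}(1-t^r w)\cdot \operatorname{det}_{\varphi_{1/d,r/d}(\h^*)}(1-wt^r) / \operatorname{det}_{\h^*}(1-wt^r)=1$ after the $t^r$-substitution is absorbed, i.e. that the $t^r$ in the denominator of $\chr_{\LL{1/d}{\tau_0}}(t^r,w)$ cancels against $\operatorname{det}_{\varphi(\h^*)}(1-wt^r)$ and leaves $1/\operatorname{det}_{\h^*}(1-wt)$; here one uses that $\varphi_{1/d,r/d}(\h^*)=\h^*$ when $\vphi=\operatorname{id}$ and $\varphi_{1/d,r/d}(\h^*)=\h$ (the Galois twist sends $\rp{2}{-}$ to $\rp{2}{+}$) when $\vphi$ is the transposition, matching the two forms of the character formulas listed in Theorem \ref{main}. (iii) The exponent of $t$: I need $h_{r/d}(\vphi(\sigma))=(1-r)+r\,h_{1/d}(\sigma)+[\text{shift absorbed into the determinant}]$. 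This is the content of the scaling relation between the $\Bf{h}$-weights $h_c(\tau)$ under $\Phi_{1/d,r/d}$, and it is exactly what makes the grading shift on standard modules work; I would check it against the explicit value $h_c(\sigma)=\tfrac12\dim\h-\sum_{s\in S}\tfrac{2c(s)}{1-\lambda_s}s$ specialized to $G_{12}$, noting $h_c$ depends linearly on $c$ so $h_{r/d}(\sigma)$ and $r\,h_{1/d}(\sigma)$ are related up to the universal additive constant $1-r$ coming from the leading term $\tfrac12\dim\h=1$ of the two-dimensional $\h$.

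Once these three identities are in place, the equality of the two formal power series follows by comparing coefficients of each monomial $t^\alpha$, and the finite-dimensionality statement is then a triviality: the character of $\LL{r/d}{\vphi(\tau)}$ is a polynomial (not a power series) in $t$ if and only if the character of $\LL{1/d}{\tau}$ is, since the two are related by the substitution $t\mapsto t^r$ and multiplication by the Laurent polynomial $\operatorname{det}_{\varphi(\h^*)}(1-wt^r)\,t^{1-r}$; and evaluating $\dim=\chr(1,1)$, the substitution $t\mapsto t^r$ multiplies the degree (hence, for a graded module, effectively scales the dimension) while the prefactor $\operatorname{det}_{\varphi(\h^*)}(1-t^r)=(1-t^r)^2$ contributes the extra factor, yielding $\dim(\LL{r/d}{\vphi(\tau)})=r^2\dim(\LL{1/d}{\tau})$ after a short generating-function computation (one can also see the factor $r^2$ directly from the corresponding scaling of the Hilbert series of $S\h^*$). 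I expect step (iii), pinning down the exact $t$-exponent bookkeeping — disentangling the $r\,h_{1/d}(\sigma)$ term, the $1-r$ shift, and the determinant substitution so that everything aligns across all $d\in\{2,3,4,12\}$ — to be the only genuinely delicate point; steps (i) and (ii) are direct consequences of the proposition and the structure of the character table of $G_{12}$.
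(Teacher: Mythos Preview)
Your approach is correct and is exactly the route the paper intends: the paper does not give an explicit proof at all, merely saying the corollary is ``an easy consequence of these formulas and the formula for the character of $M_{c}(\tau)$,'' and your three bookkeeping identities (i)--(iii) are precisely the content of that easy consequence. In particular, identity (iii) works out cleanly because for $G_{12}$ one has $h_{c}(\sigma)=1-c\sum_{s}s|_{\sigma}$, and since $\sum_{s}s$ vanishes on both $\rp{2}{+}$ and $\rp{2}{-}$ the value $h_{c}(\vphi(\sigma))=h_{c}(\sigma)$ is unchanged by the transposition, so $h_{r/d}(\vphi(\sigma))=(1-r)+r\,h_{1/d}(\sigma)$ follows immediately from linearity in $c$.

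Two small points worth tightening. In (ii), the denominator coming from $\chr_{\LL{1/d}{\tau}}(t^r,w)$ is $\gamma\bigl(\operatorname{det}_{\h^*}(1-t^rw)\bigr)$, not $\operatorname{det}_{\h^*}(1-t^rw)$; the Galois element $\gamma$ acts on the trace term $\chi_{\h^*}(w)$ appearing as a coefficient, and since $\det_{\h^*}(w)\in\{\pm1\}$ is rational one gets exactly $\operatorname{det}_{\vphi(\h^*)}(1-t^rw)$, which then cancels the numerator of the prefactor. For the dimension statement, rather than the slightly imprecise ``scales the dimension'' phrasing, simply evaluate the character at $w=1$ and then let $t\to 1$: the prefactor becomes $(1+t+\cdots+t^{r-1})^2\,t^{1-r}$, whose value at $t=1$ is $r^2$, while $P(t^r,1)\to P(1,1)=\dim\LL{1/d}{\tau}$.
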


Note that property (3) from the end of section \ref{shift} follows from this corollary. 

\subsection{Proof of corollary \ref{aspherical}} Assuming theorem \ref{main}, we are trying to compute the set of all aspherical values of $c$ for $G_{12}$, meaning values of $c$ for which there are modules in $\mo_{c}$ with no nontrivial $W$-invariants. It is obviously enough to just inspect the irreducible modules for this property. First of all, according to \cite{BE} 4.1, a module in $\mo_{c}(W,\mathfrak{h})$ which has no nontrivial $W$-invariants is either finite dimensional or induced from a module in $\mo_{c}(W',\mathfrak{h'})$, where $W'$ is a parabolic subgroup of $W$. It is easy to see that the only nontrivial parabolic subgroups of $G_{12}$ are isomorphic to $\mathbb{Z}_{2}$, and that the only aspherical value for $\mathbb{Z}_{2}$ is $c=-1/2$. Indeed, the modules $L_{-1/2}(\mathbf{1}_{-}), L_{-1/2}(\mathbf{2}), L_{-1/2}(\mathbf{3}_{-})$ have no $W$-invariants, and $-1/2$ is aspherical for $G_{12}$. 

We are left with the task of examining the finite dimensional modules for invariants. First assume $c>0$. It is clear from the character formulas for the finite dimensional modules from \ref{main} that all the $L_{r/d}(\tau)$, for $0<r/d<1$, contain an invariant, except $L_{r/4}(\bf{3}_{+})$. Checking values $r/4$ one by one we see that $c=1/4$ is indeed aspherical (with $L_{1/4}(\bf{3}_{+})$ containing no invariant), $c=3/4$ and $c=5/4$ are spherical ($L_{3/4}(\bf{3}_{+})$ and $L_{3/4}(\bf{3}_{+})$ contain $\bf{1}_{+}\subset S^{2}\mathfrak{h}^*\otimes \bf{3}_{+}$. Finally, from the properties of shift functors at the end of section \ref{shift} it follows that if $c$ is aspherical, then every module in $\mo_{c+1}$ contains both a $W$-invariant and a $W$-antiinvariant, so in particular $c+1$ is spherical too, and conclude that the only positive aspherical value for $G_{12}$ is $1/4$.

Next, it is clear from \ref{cpositive} that for $c<0$ the module $L_{c}(\tau)$ has no $W$-invariants if and only if the corresponding module $L_{-c}(\mathbf{1}_{-}\otimes \tau)$ has no $W$-antiinvariants. A similar computation as above, case by case, gives us the list in corollary \ref{aspherical}.

\section{Background}

 In this section, we review several concepts that will provide the basis for the methods we use in section \ref{calculations} in the proof of the main result \ref{main}.    
 
 \subsection{Contravariant Form $B$} 
We explained above that the standard module $M_{c}(\tau)$ has a unique maximal proper submodule $J_{c}(\tau)$ and that we are interested in studying the irreducible quotient $L_{c}(\tau)=M_{c}(\tau)/J_{c}(\tau)$. It turns out there is an efficient way to recognize vectors belonging to $J_{c}(\tau)$.

Consider a rational Cherednik algebra $H_{c}(W,\mathfrak{h})$ and a standard module $M_{c}(\tau)$. Let $\bar{c}:S\to \mathbb{C}$ be another conjugation invariant function, given by  $\bar{c}(s) =c(s^{-1})$, and consider the rational Cherednik algebra $H_{\bar{c}}(W,\h^*)$ and a standard module $\M{\bar{c}}{\tau^*}$ for it. Using the universal mapping property one can show that there is a unique, up to multiplication by a nonzero scalar, bilinear form $$B: \M{c}{\tau} \times \M{\bar{c}}{\tau^*} \to \mc$$ which satisfies:
\begin{eqnarray*}
B(xu,v)=B(u,xv)  && x \in \h^* \\
 B(yu,v)=B(u,yv) && y \in \h \\ 
 B(wu,v)=B(u,w^{-1}v) && w \in W
\end{eqnarray*}

The usefulness of this form is in the fact that its radical in the first coordinate, defined as the set of all vectors $v\in M_{c}(\tau)$ such that $B(v,-)=0$, is exactly equal to $J_{c}(\tau)$. This can be seen from the universal mapping property and the definition of $B$. Moreover, $B$ can be efficiently inductively computed using the following rules:
\begin{itemize}
\item for $i\ne j$, graded pieces $S^{i}\mathfrak{h^*}\otimes \tau \subset \M{c}{\tau}$ and $S^{j}\mathfrak{h}\otimes \tau^* \subset \M{\bar{c}}{\tau^*}$ are orthogonal with respect to $B$;
\item on $\tau \otimes \tau^* \subset \M{c}{\tau}\otimes  \M{\bar{c}}{\tau^*}$, the form $B$ is the standard pairing of a vector space with its dual;
\item once the form $B_{n}: \left( S^{n}\h^{*} \otimes \tau \right) \otimes \left( S^{n}\h \otimes \tau^{*}\right)  \rar \mc$ has been computed, the form $B_{n+1}: \left( S^{n+1}\h^{*} \otimes \tau \right) \otimes \left( S^{n+1}\h \otimes \tau^{*}\right)  \rar \mc$  is given by $B_{n+1}(u,yv)=B_{n}(D_{y}(u),v)$.
\end{itemize}
 
This gives an efficient way of programming the form $B$ in one of the algebra software packages. Then calculating the rank of the matrix of $B$ restricted to some particular graded piece of $M_{c}(\tau)$ gives us the dimension of that graded piece of $L_{c}(\tau)$. We do these computations in MAGMA algebra software \cite{Magma}.

\subsection{Lowest $\mathbf{h}$-weights} \label{lowestparagraph}
All modules in category $\mo_{c}$ have an inner grading by the action of $\mathbf{h}$. If the Grothendieck group expression for a standard module in terms of irreducible modules is $$[M_{c}(\tau)]=\sum_{\sigma} \hat{n}_{\tau,\sigma}[L_{c}(\sigma)]$$ for some nonnegative integers $\hat{n}_{\tau,\sigma}$, then the action of $\mathbf{h}$ implies that $\hat{n}_{\tau,\tau}=1$ and that the only $\sigma\ne \tau$ for which $\hat{n}_{\tau,\sigma}$ is nonzero satisfy $h_{c}(\sigma)-h_{c}(\tau)\in \mathbb{Z}_{>0}$. Consequently, the matrix of integers $\hat{n}_{\tau,\sigma}$ falls apart into block diagonal matrices such that $\sigma$ and $\tau$ label the row and column in the same block if and only if $h_{c}(\sigma)-h_{c}(\tau)\in \mathbb{Z}$. Moreover, with the appropriate ordering of $\sigma$ in the same block (the ordering corresponding to $h_{c}(\sigma)-h_{c}(\tau)$), these blocks are themselves upper triangular matrices with $1$ on diagonal. Hence, inverting this matrix $[\hat{n}_{\tau,\sigma}]$ to get the matrix of integers $[n_{\tau,\sigma}]$ that allow us to express irreducible modules in terms of standard ones, $$[L_{c}(\tau)]=\sum_{\sigma} n_{\tau,\sigma}[M_{c}(\sigma)],$$ we conclude that the matrix $[n_{\tau,\sigma}]$ is again of the same form. In other words, $n_{\tau,\tau}=1$, and the only $\sigma\ne \tau$ for which $n_{\tau,\sigma}$ is nonzero satisfy $h_{c}(\sigma)-h_{c}(\tau)\in \mathbb{Z}_{>0}$. Because of that, values of $h_{c}(\tau)$ for all $\tau$ carry a lot of information, and we will use this information in section \ref{calculations}. For $W=G_{12}$, they are calculated in table \ref{lowest}.

\begin{table}[!h]
\begin{center}
\begin{tabular}{| l | *{8}{c} |} 
\hline 
$\tau$	& $\rp{1}{+}$ & $\rp{1}{-}$ & $\rp{2}{\;}$ & $\rp{2}{+}$ & $\rp{2}{-}$ & $\rp{3}{+}$ & $\rp{3}{-}$ & $\rp{4}{\;}$ \\ 
\hline
\hline
$\sum_{s \in S}{s} \; |_{\tau}$ & 12 & -12 & 0 & 0 & 0 & 4 & -4 & 0 \\
$h_{c}(\tau)$ & 1-12c & 1+12c & 1 & 1 & 1 & 1-4c & 1+4c & 1 \\
\hline
\end{tabular}
\end{center}
\caption{Lowest $\mathbf{h}$-weights}\label{lowest}
\end{table}

\subsection{A Useful Lemma} 
The following result can be found in \cite{ES} as Lemma 3.5.

\begin{lemma}Let $\sigma \subset \h^{*} \otimes \tau \subset \M{c}{\tau}$ be an irreducible subrepresentation of $W$. The elements of $\h$ act on $\sigma$ by zero iff $h_{c}(\sigma)-h_{c}(\tau)=1$. \label{35}
\end{lemma}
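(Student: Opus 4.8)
The plan is to relate the action of $\h$ on $\sigma \subset \h^*\otimes\tau$ to the eigenvalue of the grading element $\mathbf{h}$, using the Euler-type identity that underlies the definition of $\mathbf{h}$. The key player is the operator $\mathbf{eu} = \sum_i x_i y_i + \sum_i y_i x_i / \ldots$ — more precisely, I would work with the element $\sum_i x_i y_i$ acting on the graded piece $S^1\h^*\otimes\tau = \h^*\otimes\tau$. For $v\in\h^*\otimes\tau$, one computes $\sum_i x_i y_i \cdot v$ in two ways: on one hand, since $y_i$ lowers degree by one, $y_i\cdot v$ lies in $\tau$, and $x_i$ brings it back; on the other hand, $\mathbf{h} = \sum_i x_i y_i + \frac12\dim\h - \sum_{s}\frac{2c(s)}{1-\lambda_s}s$ acts on $\h^*\otimes\tau$ by the scalar $h_c(\tau)+1$. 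Combining these, $\sum_i x_i y_i$ acts on $\sigma$ by $h_c(\tau)+1 - h_c(\sigma) + (\text{correction from the }W\text{-action difference})$; carefully, since $h_c(\sigma)$ is exactly $\frac12\dim\h - \sum_s \frac{2c(s)}{1-\lambda_s}s|_\sigma$, the operator $\sum_i x_i y_i$ acts on $\sigma$ as the scalar $h_c(\tau)+1-h_c(\sigma)$.

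\textbf{Key steps.} First I would record that $\sum_i x_i y_i$, acting on the degree-one piece $\h^*\otimes\tau$, is $W$-equivariant (it commutes with $W$ since $\mathbf{h}$ does and the two correction terms are $W$-equivariant scalars on their respective modules), hence acts on the irreducible summand $\sigma$ by a scalar, call it $\mu_\sigma$. Second, evaluate $\mathbf{h}$ on $\h^*\otimes\tau$: it equals $h_c(\tau)+1$ there, and also equals $\sum_i x_i y_i + (\frac12\dim\h - \sum_s\frac{2c(s)}{1-\lambda_s}s)$, where the parenthesized operator restricted to $\sigma$ is exactly $h_c(\sigma)$ by definition. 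Therefore $\mu_\sigma = h_c(\tau)+1-h_c(\sigma)$. Third, I claim $\h$ acts by zero on $\sigma$ if and only if $\mu_\sigma = 0$: if $\h\cdot\sigma=0$ then visibly $\sum_i x_i y_i$ kills $\sigma$, so $\mu_\sigma=0$; conversely, if $\mu_\sigma = 0$, then $\sum_i x_i y_i$ annihilates $\sigma$, and one shows this forces each $y_i$ to annihilate $\sigma$ — here one uses that the pairing $B$ restricted to $\tau\otimes\tau^*$ is nondegenerate together with the relation $B_1(x_i v, w) = B_0(v, y_i w)$, or equivalently that on $M_c(\tau)$ the form $\sum_i x_i\otimes y_i$ induces a nondegenerate pairing between $\h^*\otimes\tau$ (inside $M_c(\tau)$) and $\h\otimes\tau^*$ (inside $M_{\bar c}(\tau^*)$) up to the scalar $\mu$; vanishing of the scalar on $\sigma$ gives $B(\sigma, -)=0$ on the relevant piece, and since $\sigma$ generates, the radical argument shows $y_i\sigma\subset$ (radical restricted to $\tau$) $=0$. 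Combining: $\h\cdot\sigma = 0 \iff h_c(\sigma) = h_c(\tau)+1$, which is the statement.

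\textbf{Main obstacle.} The genuinely delicate direction is showing that $\mu_\sigma = 0$ implies $\h$ acts by zero, not merely that the composite $\sum_i x_i y_i$ vanishes. The cleanest route is to pair against $M_{\bar c}(\tau^*)$: the operators $y_i : \h^*\otimes\tau \to \tau$ are, up to the identification $B_0$, dual to the operators $x_i : \tau^* \to \h\otimes\tau^*$, and the scalar $\mu_\sigma$ is exactly the value of the induced pairing between $\sigma$ and its mirror $\sigma^*\subset\h\otimes\tau^*$; when it vanishes, the vector $y_i v$ for $v\in\sigma$ lies in the radical of $B_0$ on $\tau$, which is zero since $B_0$ is the standard nondegenerate pairing. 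I would make sure to state the $W$-equivariance carefully so that "$\mu_\sigma$ is a scalar" is rigorous, and to note that $\sigma$ irreducible is what lets us pass from "the scalar is zero" to "the whole operator is zero on $\sigma$." Since the paper cites this as Lemma 3.5 of \cite{ES}, a short proof along these lines, or simply a reference, suffices.
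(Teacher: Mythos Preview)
Your overall strategy and the key computation are correct and match the standard argument (the paper itself gives no proof, simply citing \cite{ES}, Lemma~3.5): the element $\sum_i x_i y_i$ commutes with $W$, hence acts on the irreducible $\sigma$ by a scalar $\mu_\sigma$, and comparing with $\mathbf{h}=\sum_i x_iy_i+\bigl(\tfrac12\dim\h-\sum_s\tfrac{2c(s)}{1-\lambda_s}s\bigr)$ on $\h^*\otimes\tau$ gives $\mu_\sigma=h_c(\tau)+1-h_c(\sigma)$.

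However, the direction you flag as the ``main obstacle'' is in fact immediate, and your detour through the contravariant form $B$ is both unnecessary and, as written, incomplete. The point is simply this: for $v\in\sigma\subset\h^*\otimes\tau$, each $y_iv$ lies in $S^0\h^*\otimes\tau=\tau$, and the action of $x_i$ on $M_c(\tau)\cong S\h^*\otimes\tau$ is just left multiplication. Hence
\[
\textstyle\sum_i x_iy_i\,v \;=\; \sum_i x_i\otimes(y_iv)\ \in\ \h^*\otimes\tau,
\]
and since $\{x_i\}$ is a basis of $\h^*$, this vanishes if and only if every $y_iv=0$. Thus $\mu_\sigma=0\iff \h\cdot\sigma=0$ with no further work.

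Your $B$-form sketch has a genuine gap: you assert that $\mu_\sigma$ coincides with the scalar by which $B_1$ pairs $\sigma$ with its ``mirror'' $\sigma^*\subset\h\otimes\tau^*$, but you do not justify this identification. Moreover, if $\sigma$ occurs in $\h^*\otimes\tau$ with multiplicity greater than one, the pairing between the $\sigma$- and $\sigma^*$-isotypic components is a matrix rather than a single scalar, and the argument as stated does not go through. The one-line observation above avoids all of this.
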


 %\textit{Proof.} We know that the action of the basis element $y_{i}$ on $\M{c}{\tau}$ is given by the Dunkl operator $D_{y_{i}}$. On $S^{1}\h^{*} \otimes \tau $ this action determines a linear map $\tau \otimes \h^{*} \otimes \h \rar \tau$ or equivalently an operator $\tau \otimes \h^{*} \rar \tau \otimes \h^{*}$. This operator can be easily determined and is given by the formula $$1- \sum_{s \in S}{\frac{2 c(s)}{1-\lambda_{s}}(1-s) \otimes s} \; .$$ So on $\sigma$ this endomorphism is the scalar $1+h_{c}(\tau)-h_{c}(\sigma)$. This gives the result. \Qed 

We know from the previous subsection that $n_{\tau,\sigma}$ is nonzero only if $h_{c}(\sigma)-h_{c}(\tau)\in \mathbb{Z}_{>0}$. This lemma tells us that if $h_{c}(\sigma)-h_{c}(\tau)=1$ and $\sigma $ is in the appropriate graded piece of $M_{c}(\tau)$, the converse is also true. We will use this fact repeatedly in section \ref{calculations}.

\subsection{A-matrix} \label{A-matrix} The following lemma appears in  \cite{S}, and provides a simple yet computationally very effective necessary condition on the structure constants $n_{\tau, \sigma}$ for the module $L_{c}(\tau)$ to be finite dimensional.

Fix $\tau$ and $c$ and consider the representation $\LL{c}{\tau}$ is finite dimensional. Write $[\LL{c}{\tau}]=\sum_{\sigma}{n_{\tau,\sigma}}[\M{c}{\sigma}]$. For $g\in W$, consider $\operatorname{det}_{\h^{*}}(1-gt)$ as a polynomial in $t$, and let $t_{g,1}, t_{g,2}$ be its roots. Clearly, $g\mapsto \{ t_{g,1}, t_{g,2}\}$ is constant on conjugacy classes.

Define a matrix $A$, with columns indexed by $\operatorname{Irred}(W)$ and rows indexed by the ordered pairs $(g,i)$, for $g$ a conjugacy class in $W$ and $i=1,2$, by setting the element in the row labeled by $(g,i)$ and column labeled by $\sigma$ to be equal to $t_{g,i}^{h_{c}(\sigma)} \chi_{\sigma}(g)$. 

\begin{lemma}If $\LL{c}{\tau}$ is finite dimensional, then column vector $[n_{\tau,\sigma}]_{\sigma \in \mathrm{Irred}{W}}$ is a nullvector or $A$. \label{useA} \end{lemma}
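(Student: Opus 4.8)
The plan is to exploit the fact that when $\LL{c}{\tau}$ is finite dimensional, its graded character $\chr_{\LL{c}{\tau}}(t,w)$ is a \emph{polynomial} in $t$ (not merely a formal power series), and then evaluate this polynomial identity at the eigenvalues of the $w$-action on $\h^*$. First I would write down the character identity coming from the Grothendieck group expression: using the formula $\chr_{\M{c}{\sigma}}(t,w)=\chi_{\sigma}(w)t^{h_{c}(\sigma)}/\operatorname{det}_{\h^*}(1-tw)$ from the Characters subsection, we get
$$\chr_{\LL{c}{\tau}}(t,w)=\sum_{\sigma}{n_{\tau,\sigma}}\,\frac{\chi_{\sigma}(w)\,t^{h_{c}(\sigma)}}{\operatorname{det}_{\h^*}(1-tw)}=\frac{\sum_{\sigma}{n_{\tau,\sigma}}\,\chi_{\sigma}(w)\,t^{h_{c}(\sigma)}}{\operatorname{det}_{\h^*}(1-tw)}.$$
Since $\LL{c}{\tau}$ is finite dimensional, the left-hand side is a Laurent polynomial in $t^{1/N}$ for a suitable $N$ (the $\Bf{h}$-weights lie in $h_c(\tau)+\mz_{\ge0}$ and the module is finite dimensional, so only finitely many weights occur), hence the numerator on the right must be divisible by $\operatorname{det}_{\h^*}(1-tw)$ as a function of $t$.

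Next I would fix a conjugacy class $g$ and work with a representative for which $w=g$ acts on $\h^*$; its action there is diagonalizable with eigenvalues $\mu_{g,1},\mu_{g,2}$, so $\operatorname{det}_{\h^*}(1-tg)=(1-\mu_{g,1}t)(1-\mu_{g,2}t)$, which vanishes precisely at $t=t_{g,i}:=\mu_{g,i}^{-1}$ for $i=1,2$. (These are exactly the roots $t_{g,i}$ of $\operatorname{det}_{\h^*}(1-gt)$ named in the statement, and they depend only on the conjugacy class of $g$.) Since the numerator $\sum_{\sigma}n_{\tau,\sigma}\chi_{\sigma}(g)t^{h_c(\sigma)}$ must be divisible by $\operatorname{det}_{\h^*}(1-tg)$, it must vanish at $t=t_{g,i}$ for $i=1,2$. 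That is,
$$\sum_{\sigma}{n_{\tau,\sigma}}\,t_{g,i}^{h_{c}(\sigma)}\,\chi_{\sigma}(g)=0\qquad\text{for all conjugacy classes }g\text{ and }i=1,2.$$
But the left-hand side is precisely the $(g,i)$-entry of the matrix-vector product $A\cdot[n_{\tau,\sigma}]_\sigma$, by the definition of $A$. Hence $[n_{\tau,\sigma}]_{\sigma\in\operatorname{Irred}(W)}$ is a nullvector of $A$, which is the claim.

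The one technical subtlety I would be careful about — and I expect it to be the main (minor) obstacle — is making the divisibility argument airtight when $h_c(\sigma)$ is not an integer: the exponents $t^{h_c(\sigma)}$ live in a cyclotomic-type extension, so to talk about ``divisibility'' cleanly I would substitute $t=u^N$ for a common denominator $N$ of all the $h_c(\sigma)$ (and of the orders of the eigenvalues $\mu_{g,i}$), turning everything into genuine Laurent polynomials in $u$, multiply through by $u^{-Nh_c(\tau)}$ to clear negative powers, and then run the vanishing argument on honest polynomials; the finite-dimensionality of $\LL{c}{\tau}$ is exactly what guarantees the resulting expression is a polynomial and not an infinite series, so that ``numerator divisible by denominator'' is legitimate. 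Once that normalization is in place, the evaluation at $t_{g,i}$ and the identification with $A$ are immediate, and the fact that $\{t_{g,1},t_{g,2}\}$ is a class function follows since conjugate elements act by conjugate (hence cospectral) operators on $\h^*$.
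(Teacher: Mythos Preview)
Your proposal is correct and follows essentially the same argument as the paper: write the character of $\LL{c}{\tau}$ as a single fraction with denominator $\operatorname{det}_{\h^*}(1-gt)$, observe that finite-dimensionality forces the character to be a (Laurent) polynomial so the numerator must vanish at each root $t_{g,i}$, and recognize the resulting equations as the rows of $A$ applied to $[n_{\tau,\sigma}]$. Your treatment of the fractional exponents via a common denominator substitution $t=u^N$ is more careful than the paper's, which simply asserts the character ``is a polynomial'' and that the numerator ``must vanish'' at the roots without further comment.
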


\begin{proof} Since $\LL{c}{\tau}$ is finite dimensional, it follows that its character is a polynomial. On the other hand, it can be written as $$\chr_{\LL{c}{\tau}}(g,t)=\frac{\sum_{\sigma}{n_{\tau,\sigma} \cdot \chi_{\sigma}(g) \cdot t^{h_{c}(\sigma)}}}{\operatorname{det}_{\h^{*}}(1-gt)} .$$ For every $t_{g,i}$, the denominator vanishes; since the character doesn't have a pole, the numerator must vanish as well. This gives the desired claim. \end{proof} 
 
The matrix $A$ is easy to compute and gives a strong condition on $n_{\tau,\sigma}$. We compute it using MAGMA, for variable $c$. The matrix and basis vectors for its null-spaces at $c=1/12,1/4,1/3,1/2$ are given in tables \ref{matrixA} and \ref{nulls} the appendix.

 \subsection{Induction and Restriction Functors} 
 
A subgroup $W'$ of a complex reflection group $W$ is called parabolic if there exists $a \in \h$ such that $W'$ is the stabilizer of $a$. It can be shown that a parabolic subgroup is itself a complex reflection group, with reflection representation $\mathfrak{h}'=\h/\h^{W'}$, where $\h^{W'}$ is the subspace of $W'$-invariant elements. For example, the only nontrivial parabolic subgroups of $G_{12}$ are isomorphic to $\mathbb{Z}_{2}$ and contain just one reflection and the identity.

Let $c'$ the restriction of $c$ to the reflections in $W'$. Induction and restriction functors, introduced in \cite{BE}, give a way of relating modules for $H_{c}(W,\h)$ and of $H_{c'}(W',\h/\h^{W'})$. We omit the details of their construction and give only the properties we will use. 

\begin{proposition}\label{ind}There exist induction and restriction functors $$\operatorname{Res}_{a}: \mo_c (W,\mathfrak{h}) \rar \mo_{c'}(W',\mathfrak{h}')$$   $$\operatorname{Ind}_{a}: \mo_{c'}(W',\mathfrak{h}') \rar \mo_c(W,\mathfrak{h})$$ associated to $a \in \h$ such that $W'=\operatorname{Stab}(a)$. These functors are exact. The following formulas hold for generic $c$, and on the level of Grothendieck group for every $c$:
$$\operatorname{Res}_{a}(M_c(W,\h,\tau))=\bigoplus_{\sigma \in \operatorname{Irred}(W')} \operatorname{dim}(\operatorname{Hom}(\sigma,\tau|_{W'}))M_{c'}(W',\h',\sigma),$$ $$\operatorname{Ind}_{a}(M_{c'}(W',\h',\sigma))=\bigoplus_{\tau \in \operatorname{Irred}(W)} \operatorname{dim}(\operatorname{Hom}(\sigma,\tau|_{W'}))M_c(W,\h,\tau).$$
\end{proposition}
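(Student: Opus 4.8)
The plan is to prove Proposition~\ref{ind} by citing the construction of the induction and restriction functors from \cite{BE} and then verifying the two displayed Grothendieck-group formulas directly. First I would recall that, given $a \in \h$ with $W' = \operatorname{Stab}(a)$, the paper \cite{BE} produces exact functors $\operatorname{Res}_a$ and $\operatorname{Ind}_a$ between $\mo_c(W,\h)$ and $\mo_{c'}(W',\h')$; exactness is part of their construction (it follows from the fact that, after passing to the relevant completions, $\operatorname{Res}_a$ is essentially a direct summand of a restriction-of-scalars functor and $\operatorname{Ind}_a$ is tensoring with a flat bimodule). So the content to establish here is only the effect of these functors on standard modules in the Grothendieck group.

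The key step is the computation $[\operatorname{Res}_a(M_c(W,\h,\tau))] = \bigoplus_{\sigma} \dim\Hom(\sigma,\tau|_{W'})\, [M_{c'}(W',\h',\sigma)]$. I would argue this by a character/graded-dimension comparison. Both sides live in $\mo_{c'}(W',\h')$, and since the functors are compatible with the grading (up to a shift that is the same for all $\tau$), it suffices to match characters. By definition $\operatorname{ch}_{M_c(W,\h,\tau)}(t,w) = \chi_\tau(w)\, t^{h_c(\tau)} / \det_{\h^*}(1-tw)$, and the key geometric input from \cite{BE} is that, near a generic point of the fixed locus of $a$, the module $M_c(W,\h,\tau)$ restricts to $(S(\h^*/(\h')^{*})) \otimes_{\mathbb{C}} \big(\text{a module over } H_{c'}(W',\h')\big)$ whose $W'$-structure in lowest degree is $\tau|_{W'}$. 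Decomposing $\tau|_{W'} = \bigoplus_\sigma \dim\Hom(\sigma,\tau|_{W'})\,\sigma$ and using the universal property of standard modules then gives the formula for generic $c$; since both sides are manifestly "constant in $c$" as elements of the Grothendieck group (the multiplicities $\dim\Hom(\sigma,\tau|_{W'})$ do not depend on $c$, and $[\operatorname{Res}_a]$ is additive on short exact sequences, hence determined by its values on the classes $[M_c(\tau)]$ which vary continuously), it extends to all $c$. The formula for $\operatorname{Ind}_a$ follows from the same input together with the fact that $\operatorname{Ind}_a$ and $\operatorname{Res}_a$ are biadjoint (also proved in \cite{BE}): for standard modules $\Hom(\operatorname{Ind}_a M_{c'}(\sigma), M_c(\tau)^\vee)$ and $\Hom(M_{c'}(\sigma), \operatorname{Res}_a M_c(\tau)^\vee)$ agree, and a Frobenius-reciprocity bookkeeping on the $W$-versus-$W'$ representation theory upgrades this to the stated decomposition in $K(\mo_c)$.

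The main obstacle I anticipate is not any single calculation but rather correctly importing the precise statements from \cite{BE}: the induction/restriction functors there are defined via completions and an equivalence with modules over a completed rational Cherednik algebra of the parabolic, and one must be careful about (i) the degree shift relating $h_c(\tau)$ and $h_{c'}(\sigma)$, and (ii) the fact that the clean formulas on standard modules hold \emph{a priori} only for generic $c$ (where the functors are "as simple as possible") and only descend to all $c$ after passing to the Grothendieck group, because for special $c$ the functors need not send standards to direct sums of standards as actual modules. Since this proposition is quoted verbatim from \cite{BE}, the honest and economical route is to state it as such with a reference, and the "proof" amounts to little more than: \emph{This is \cite{BE}, Theorems on induction and restriction functors; see also Proposition~\ref{ind}'s hypotheses are met since the parabolic subgroups of $G_{12}$ are the groups $\mathbb{Z}_2$.} I would therefore write a short proof that cites \cite{BE} for the existence, exactness, and biadjointness, and includes the character-matching paragraph above only as the verification that the Grothendieck-group formulas take the asserted shape.
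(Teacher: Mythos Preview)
The paper does not prove this proposition at all: it is stated as a summary of results from \cite{BE}, introduced with ``We omit the details of their construction and give only the properties we will use.'' Your proposal correctly identifies this, and your final paragraph---cite \cite{BE} for existence, exactness, and the standard-module formulas---is exactly what the paper does, so in that sense your approach matches.

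The extra content you sketch (character matching for $\operatorname{Res}_a$, biadjointness plus Frobenius reciprocity for $\operatorname{Ind}_a$) goes beyond what the paper provides and is not needed here. It is broadly in the right spirit, but as written it is not a self-contained argument: the ``constant in $c$'' step is heuristic rather than rigorous, and deducing the Grothendieck-group identity for $\operatorname{Ind}_a$ from biadjointness requires more than pairing with a single $[M_c(\tau)^\vee]$. If you want to include any justification beyond the citation, the cleanest route is simply to point to the specific results in \cite{BE} (the construction and exactness are Theorem~2.3 and Theorem~3.4 there; the effect on standard modules is Proposition~3.9 and Corollary~3.10, which already give both formulas), rather than re-deriving them.
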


\section{Calculations for $c=1/12, 1/3, 1/4, 1/2$}
\label{calculations}
This section finishes the proof of theorem \ref{main} by giving explicit calculations of the structure of category $\mo_{c}$ for $G_{12}$ in the remaining cases $c \in \left\{1/12, 1/3, 1/4, 1/2 \right\}$. After describing $\mo_{c}$ for $c=1/2$, we also need to show $1/2$ is a spherical value, so the shift functor \ref{shift} is an equivalence and we can use the information about $1/2$ to derive description of $\mo_{r/2}$ for all $r\in \mathbb{Z}_{>0}$.

%\subsection{Conditions on Finite Dimensional Representations} Using lemma \ref{useA} and the corresponding table \ref{matrixA}, we find necessary conditions on finite dimensional $\LL{c}{\tau}$ for $c \in \left\{1/12, 1/3, 1/4, 1/2 \right\}$ by evaluating the $A$-matrix and determining basis vectors for it nullspace, which we know to be nontrivial. The results are included in Table 3 of the appendix.  

\subsection{Proof of theorem \ref{main} for $c=1/12$}
Evaluating the expressions in table \ref{lowest} at $c=1/12$, we get the lowest $\mathbf{h}$-weights in table \ref{c12}.

\begin{table}[!h] 
\begin{tabular}{| l | *{8}{c} |} 
\hline 
$\tau$	& $\rp{1}{+}$ & $\rp{1}{-}$ & $\rp{2}{\;}$ & $\rp{2}{+}$ & $\rp{2}{-}$ & $\rp{3}{+}$ & $\rp{3}{-}$ & $\rp{4}{\;}$ \\ 
\hline
\hline
$h_{1/12}(\tau)$ & 0 & 2 & 1 & 1 & 1 & 2/3 & 4/3 & 1 \\
\hline
\end{tabular}
\caption{Lowest $\bf{h}$-weights for $c=1/12$} \label{c12}
\end{table}

There is no $\sigma$ such that $h_{1/12}(\sigma)-h_{1/12}(\rp{1}{-})$ is positive. So, discussion in \ref{lowestparagraph} implies that all the constants $n_{\mathbf{1}_{-},\sigma}$ are $0$ except  $n_{\mathbf{1}_{-},\mathbf{1}_{-}}=1$, in other words that $$\M{1/12}{\rp{1}{-}}=\LL{1/12}{\rp{1}{-}}$$ is simple. Similarly, and there is no  $\sigma$ such that $h_{1/12}(\sigma)-h_{1/12}(\rp{3}{-})$ or $h_{1/12}(\sigma)-h_{1/12}(\rp{3}{+})$ are integers, so the same reasoning implies $$\M{1/12}{\rp{3}{+}}=\LL{1/12}{\rp{3}{+}}$$ $$\M{1/12}{\rp{3}{-}}=\LL{1/12}{\rp{3}{-}}$$ are simple. We will use this argument a lot. 

The only $\sigma$ that can appear in the Grothendieck group expressions for $L_{1/12}(\tau)$ if $\tau=\rp{2},\bf{2}_{+},\bf{2}_{-},\bf{4}$ is $\sigma=\bf{1}_{-}$, again due to the condition that $h_{c}(\sigma)-h_{c}(\tau)\in \mathbb{Z}_{>0}$. In that case, this difference is $h_{c}(\sigma)-h_{c}(\tau)=1$, so the graded piece $S^{1}\mathfrak{h}^*\otimes \tau\subseteq M_{1/12}(\tau)$ contains a subrepresentation isomorphic to $\bf{1}_{-}$. We decompose these group representations into irreducible subrepresentations and get 

\begin{table}[!h] 
\begin{tabular}{| l | *{4}{c} |} 
\hline 
$\tau$	&  $\rp{2}{\;}$ & $\rp{2}{+}$ & $\rp{2}{-}$ & $\rp{4}{\;}$ \\ 
\hline
\hline
$S^{1}\h^{*} \otimes \tau $ & $\rp{4}{\;}$ & $\rp{1}{+} \oplus \rp{3}{-}$ & $\rp{1}{-} \oplus \rp{3}{+}$ & $\rp{2}{\;} + \rp{3}{+} \oplus \rp{3}{-} $   \\
\hline
\end{tabular}
\end{table}

So we conclude $$\M{1/12}{\rp{2}{\;}}=\LL{1/12}{\rp{2}{\;}},$$ $$\M{1/12}{\rp{2}{+}}=\LL{1/12}{\rp{2}{+}},$$ $$\M{1/12}{\rp{4}{\;}}=\LL{1/12}{\rp{4}{\;}}$$ are simple because $\rp{1}{-}$ does not appear. However,  $S^{1}\h^* \otimes \rp{2}{-}$ contains a copy of $\rp{1}{-}$. By lemma \ref{35}, which says this necessary condition is in fact sufficient if the difference $h_{c}(\sigma)-h_{c}(\tau)$ is $1$, we conclude that $$ \lb{1/12}{\rp{2}{-}}=\mb{1/12}{\rp{2}{-}} - \mb{1/12}{\rp{1}{-}} \; .$$

Similarly, we calculate the decomposition of the first graded piece of $\M{1/12}{\rp{1}{+}}$. We find that $S^{1}\h^* \otimes \rp{1}{+} = \rp{2}{-}$. %$$S^{2}\h^* \otimes \rp{1}{+} = \rp{3}{+}.$$ 
Again by lemma \ref{35}, the entire graded piece $S^{1}\h^* \otimes \rp{1}{+} = M_{1/12}(\rp{1}{+})[1]$ consists of singular vectors. They generate a representation isomorphic to a quotient of  $\M{1/12}{\rp{2}{-}}$, which we saw was either $\M{1/12}{\rp{2}{-}}$ or $\LL{1/12}{\rp{2}{-}}$. Comparing dimensions of $M_{1/12}(\rp{1}{+})$ and $\M{1/12}{\rp{2}{-}}$ in the $\mathbf{h}$-eigenspace $2$, we see that $\operatorname{dim}(\M{1/12}{\rp{2}{-}}[2])=\operatorname{dim}(\mathfrak{h}^*\otimes \rp{2}{-})=4$, while $\operatorname{dim}(\M{1/12}{\rp{1}{+}}[2])=\operatorname{dim}(S^2\mathfrak{h}^*\otimes \rp{1}{+})=3$, so $M_{1/12}(\rp{2}{-})$ can't be a submodule of $M_{1/12}(\rp{1}{+})$. Dimensions of $L_{1/12}(\rp{2}{-})$ and $M_{1/12}(\rp{1}{+})$ match in all higher degrees, so we conclude $J_{1/12}(\rp{1}{+})=L_{1/12}(\rp{2}{-})$ and
$$\lb{1/12}{\rp{1}{+}}=\mb{1/12}{\rp{1}{+}} - \lb{1/12}{\rp{2}{-}}=\mb{1/12}{\rp{1}{+}} - \mb{1/12}{\rp{2}{-}} + \mb{1/12}{\rp{1}{-}}.$$ 

This module is one dimensional, and is in fact the only finite dimensional irreducible module at $c=1/12$. Its character is easy to compute directly or from the expression in terms of standard modules. We note that its character corresponds vector $e_{1/12}$ in the table \ref{nulls} in the appendix. The part of the statement of theorem \ref{main} for $c=r/12$ now follows from this directly by applying scaling functors.

\subsection{Proof of theorem \ref{main} for $c=1/4$} Using table \ref{lowest}, we get lowest weights for $c=1/4$ in table \ref{c4}. 

\begin{table}[!h] 
\begin{tabular}{| l | *{8}{c} |} 
\hline 
$\tau$	& $\rp{1}{+}$ & $\rp{1}{-}$ & $\rp{2}{\;}$ & $\rp{2}{+}$ & $\rp{2}{-}$ & $\rp{3}{+}$ & $\rp{3}{-}$ & $\rp{4}{\;}$ \\ 
\hline
\hline
$h_{1/12}(\tau)$ & -2 & 4 & 1 & 1 & 1 & 0 & 2 & 1 \\
\hline
\end{tabular}
\caption{Lowest $\bf{h}$-weights for $c=1/4$}\label{c4} 
\end{table}

It immediately follows $$\M{1/4}{\rp{1}{-}}=\LL{1/4}{\rp{1}{-}}.$$ We will need the following decompositions: $$S^{2}\h^{*} \otimes \rp{3}{+}=\rp{1}{+}\oplus \rp{2}{\;} \oplus \rp{3}{+} \oplus \rp{3}{-}, \; \; S^{4}\h^{*} \otimes \rp{3}{+}=\rp{1}{+}\oplus \rp{2}{\;} \oplus 2 \cdot \rp{3}{+}\oplus 2 \cdot \rp{3}{-}$$

\begin{table}[h] 
\begin{center} 
\begin{tabular}{| p{1.5cm} | p{2cm} | p{2cm} | p{2cm} | p{1.5cm} | p{3.9cm} |} 
\hline 
$\tau$ & $\rp{2}{\;}$ & $\rp{2}{+}$ & $\rp{2}{-}$ & $\rp{3}{+}$ & $\rp{4}{\;}$  \\ 
\hline
\hline
$S^{1}\h^{*} \otimes \tau$ &  $\rp{4}{\;}$ & $\rp{1}{+} \oplus \rp{3}{-}$ & $\rp{1}{-}\oplus \rp{3}{+}$ & $\rp{2}{+}\oplus \rp{4}{\;}$ & $\rp{2}{\;}\oplus \rp{3}{+}\oplus \rp{3}{-}$  \\
\hline
$S^{3}\h^{*} \otimes \tau$ &  $\rp{2}{+}\oplus \rp{2}{-}\oplus \rp{4}{\;}$ & $\rp{2}{\;} \oplus \rp{3}{+} \oplus\rp{3}{-}$ & $\rp{2}{\;} \oplus \rp{3}{+} \oplus\rp{3}{-}$ & \; & $\rp{1}{+} \oplus \rp{1}{-} \oplus \rp{2}{\;}\oplus 2 \cdot \rp{3}{+} \oplus 2 \cdot \rp{3}{-}$ \\
\hline 
\end{tabular}
\end{center}
\end{table}

 So $$\M{1/4}{\rp{2}{\;}}=\LL{1/4}{\rp{2}{\;}}$$  $$\M{1/4}{\rp{2}{-}}=\LL{1/4}{\rp{2}{-}}$$ are simple, because $\rp{3}{-}$ and $\rp{1}{+}$  don't appear in the right weight space.

 We will use the expression for $\LL{1/4}{\rp{2}{+}}$ to study $\LL{1/4}{\rp{3}{-}}$. First of all, lemma \ref{35} implies that $\M{1/4}{\rp{2}{+}}$ contains a quotient of $\M{1/4}{\rp{3}{-}}$. Comparing dimensions in $\bf{h}$-weight space $4$, we see that this module cannot be $\M{1/4}{\rp{3}{-}}$. So, $$\lb{1/4}{\rp{3}{-}}=\mb{1/4}{\rp{3}{-}} - \mb{1/4}{\rp{1}{-}}$$ 
 $$\lb{1/4}{\rp{2}{+}}=\mb{1/4}{\rp{2}{+}} - \mb{1/4}{\rp{3}{-}} + \mb{1/4}{\rp{1}{-}}.$$ The latter is finite dimensional.
 
To analyze  $\M{1/4}{\rp{4}{\;}}$, first use lemma \ref{35} to conclude its Grothendieck group expression is $$\lb{1/4}{\rp{4}{\;}}=\mb{1/4}{\rp{4}{\;}} - \mb{1/4}{\rp{3}{-}} +n_{\rp{4}{\;} \bf{1_{-}}}\mb{1/4}{\rp{1}{-}} .$$ To find $n_{\rp{4}{\;},\bf{1_{-}} }$, we calculate the rank of the form $B$ in $\bf{h}$-weight space $4$ in MAGMA. We find it is 7. Since $\operatorname{dim}_{3}(\M{1/4}{\rp{4}{\;}})=16$, and $\operatorname{dim}_{2}(\M{1/4}{\rp{3}{-}})=9$, there exist two possibilities. Either $\M{1/4}{\rp{1}{-}}$ appears as a submodule of $\M{1/4}{\rp{4}{\;}}$ and $\M{1/4}{\rp{3}{-}}$, or as a submodule of neither. While our methods cannot distinguish the two cases, in both we can conclude $$\lb{1/4}{\rp{4}{\;}}=\mb{1/4}{\rp{4}{\;}} - \mb{1/4}{\rp{3}{-}} \; .$$ 

 By lemma \ref{35}, we know that $$\lb{1/4}{\rp{3}{+}}=\mb{1/4}{\rp{3}{+}}-\lb{1/4}{\rp{2}{+}}-\lb{1/4}{\rp{4}{;}} \pm \textrm{modules with lowest weight} >1.$$ Looking at dimensions in $\bf{h}$-weight space $1$, we see that $\LL{1/4}{\rp{3}{+}}$ is finite dimensional. Writing down the condition that its character must be a polynomial, which we do by saying that it needs to be a linear combination of vectors in table \ref{nulls}, we get $$\lb{1/4}{\rp{3}{+}}=\mb{1/4}{\rp{3}{+}} - \mb{1/4}{\rp{2}{+}} - \mb{1/4}{\rp{4}{\;}} + \mb{1/4}{\rp{3}{-}},$$ which corresponds to $-e_{1/4}^3$.
 
 Using MAGMA, we find that the form $B$ restricted to $M_{1/4}(\rp{1}{+})[0]$ is zero. Therefore $\M{1/4}{\rp{1}{+}}$ is finite dimensional. Referring to table \ref{nulls} again, it is easily seen that the only possible linear combination is $e_{1/4}^1+e_{1/4}^3$, giving
$$\lb{1/4}{\rp{1}{+}}=\mb{1/4}{\rp{1}{+}} - \mb{1/4}{\rp{3}{+}} + \mb{1/4}{\rp{2}{+}} \; .$$

\subsection{Proof of theorem \ref{main} for $c=1/3$}
First we calculate the lowest weights $h_{c}(\tau)$, see table \ref{c5}.
\begin{table}[!h] 
\begin{tabular}{| l | *{8}{c} |} 
\hline 
$\tau$	& $\rp{1}{+}$ & $\rp{1}{-}$ & $\rp{2}{\;}$ & $\rp{2}{+}$ & $\rp{2}{-}$ & $\rp{3}{+}$ & $\rp{3}{-}$ & $\rp{4}{\;}$ \\ 
\hline
\hline
$h_{1/3}(\tau)$ & -3 & 5 & 1 & 1 & 1 & -1/3 & 7/3 & 1 \\
\hline
\end{tabular}
\caption{Lowest $\bf{h}$-weights for $c=1/3$}\label{c5}
\end{table}

 Since $h_{1/3}(\rp{1}{-}) > h_{1/3}(\tau)$ for all $\tau \neq \rp{1}{-}$, it follows that $$\M{1/3}{\rp{1}{-}}=\LL{1/3}{\rp{1}{-}}.$$ SImilarly, $h_{1/3}(\rp{3}{\pm}) - h_{1/3}(\tau) \notin \mz$ for all $\tau \neq \rp{3}{\pm}$, so $$\M{1/3}{\rp{3}{+}}=\LL{1/3}{\rp{3}{+}}$$ $$\M{1/3}{\rp{3}{-}}=\LL{1/3}{\rp{3}{-}}.$$

 If $\M{1/3}{\rp{1}{-}}$ is contained in $\M{1/3}{\tau}$ as a submodule for $\tau=\bf{2},\bf{2}_+,\bf{2}_{-},\bf{4}$ so that $h_{1/3}(\tau)=1$, then $\M{1/3}{\tau}$ contains $\bf{1}_{-}$ in the fourth grade. Decomposing $S^{4}\h^{*} \otimes \tau $ in all of these cases, we get:

\begin{table}[!h] 
\begin{tabular}{| l | c|c|c|c| |} 
\hline 
$\tau$	&  $\rp{2}{\;}$ & $\rp{2}{+}$ & $\rp{2}{-}$ & $\rp{4}{\;}$ \\ 
\hline
\hline
$S^{4}\h^{*} \otimes \tau $ & $\rp{1}{+}\oplus \rp{1}{-}\oplus \rp{2}{\;}\oplus \rp{3}{+}\oplus \rp{3}{-}$ & $\rp{2}{-} \oplus  2 \cdot \rp{4}{\;}$ & $\rp{2}{+} \oplus  2 \cdot \rp{4}{\;}$ & $2 \cdot \rp{2}{+}\oplus  2 \cdot \rp{2}{-}\oplus  2 \cdot \rp{4}{\;} $  \\
\hline
\end{tabular}
\end{table}

Hence $$\M{1/3}{\rp{2}{+}}=\LL{1/3}{\rp{2}{+}}$$ $$\M{1/3}{\rp{2}{-}}=\LL{1/3}{\rp{2}{-}}$$ $$\M{1/3}{\rp{4}{\;}}=\LL{1/3}{\rp{4}{\;}}$$ 

We see that $S^{4}\h^* \otimes \rp{2}{\;}$ contains a copy of $\rp{1}{-}$. Let us first analyze $\LL{1/3}{\rp{1}{+}}$ and return to the description of $\LL{1/3}{\bf{2}}$ after that.

$S^{4}\h^* \otimes \rp{1}{+}= \rp{2}{\;} \oplus  \rp{3}{+}$, so it follows that $\M{1/3}{\rp{1}{+}}$ can contain a quotient of $\M{1/3}{\rp{2}{\;}}$. Using MAGMA, we compute that the rank of the contravariant form $B$ on \linebreak[3] $\M{1/3}{\rp{1}{+}}[1]=S^{4}\h^* \otimes \rp{1}{+}$. It is $3$ while the dimension of this graded piece is $5$, So $\M{1/3}{\rp{1}{+}}$ does contain a quotient of $\M{1/3}{\rp{2}{\;}}$. Calculating $\operatorname{dim}(\M{1/3}{\rp{2}{\;}}[5])=10$, while $\operatorname{dim}(\M{1/3}{\rp{1}{+}}[5])=9$. So we conclude that the module  $\M{1/3}{\bf{2}}$ cannot be simple. Combining this with the above analysis we conclude
$$  \lb{1/3}{\rp{2}{\;}}=\mb{1/3}{\rp{2}{\;}} - \mb{1/3}{\rp{1}{-}}$$
$$\lb{1/3}{\rp{1}{+}}=\mb{1/3}{\rp{1}{+}} - \mb{1/3}{\rp{2}{\;}} + \mb{1/3}{\rp{1}{-}}.$$

\subsection{Proof of theorem \ref{main} for $c=1/2$}
 Using the expressions in Table \ref{lowest}, we find the following lowest weights for $c=1/2$.     

\begin{table}[!h] 
\begin{center} 
\begin{tabular}{| l | *{8}{c} |} 
\hline 
$\tau$	& $\rp{1}{+}$ & $\rp{1}{-}$ & $\rp{2}{\;}$ & $\rp{2}{+}$ & $\rp{2}{-}$ & $\rp{3}{+}$ & $\rp{3}{-}$ & $\rp{4}{\;}$ \\ 
\hline
\hline
$h_{1/12}(\tau)$ & -5 & 7 & 1 & 1 & 1 & -1 & 3 & 1 \\
\hline
\end{tabular}
\end{center}
\caption{Lowest $\bf{h}$-weights for $c=1/2$}\label{c2}
\end{table}

It immediately follows that $$\M{1/2}{\rp{1}{-}}=\LL{1/2}{\rp{1}{-}}.$$ We will need the following decompositions:
 $$S^{4}\h^* \otimes \rp{3}{+}=\rp{1}{+}\oplus \rp{2}{\;}\oplus2 \cdot \rp{3}{+}\oplus2 \cdot \rp{3}{-}, \; \; S^{8}\h^* \otimes \rp{3}{+}=\rp{1}{+}\oplus \rp{1}{-}\oplus 2 \cdot \rp{2}{\;}\oplus 4 \cdot \rp{3}{+}\oplus 3 \cdot \rp{3}{-}$$ 

\begin{table}[!h] 
\begin{center} 
\begin{tabular}{| p{1.5cm} | p{2cm} | p{2cm} | p{2cm} | p{1.8cm} | p{2.3cm} | } 
\hline 
$\tau$	&  $\rp{2}{\;}$ & $\rp{2}{+}$ & $\rp{2}{-}$ & $\rp{3}{+}$ & $\rp{4}{\;}$ \\ 
\hline
\hline
$S^{2}\h^{*} \otimes \tau $ &  $\rp{3}{+}\oplus\rp{3}{-}$  &  $\rp{2}{-} \oplus \rp{4}{\;}$  & $\rp{2}{+}\oplus \rp{4}{\;}$ & $\rp{1}{+}\oplus \rp{2}{\;}\oplus \rp{3}{+}\oplus \rp{3}{-}$ & $\rp{2}{+}\oplus \rp{2}{-}\oplus 2 \cdot \rp{4}{\;}$   \\
\hline
$S^{6}\h^{*} \otimes \tau $ &  $\rp{2}{+}\oplus 2 \cdot \rp{3}{+}\oplus 2 \cdot \rp{3}{-}$  &  $2 \cdot \rp{2}{+}\oplus \rp{2}{-} \oplus  2 \cdot \rp{4}{\;}$ & $\rp{2}{+}\oplus 2 \cdot \rp{2}{-} \oplus  2 \cdot \rp{4}{\;}$ & \; &  $2 \cdot \rp{2}{+}\oplus 2 \cdot \rp{2}{-} \oplus  5 \cdot \rp{4}{\;}$   \\
\hline
\end{tabular}
\end{center}
\end{table}

 So $$\M{1/2}{\rp{2}{+}}=\LL{1/2}{\rp{2}{+}}$$ $$\M{1/2}{\rp{2}{-}}=\LL{1/2}{\rp{2}{-}}$$ $$\M{1/2}{\rp{4}{\;}}=\LL{1/2}{\rp{4}{\;}}$$ are simple because $\rp{3}{-}$ and $\rp{1}{-}$ do not appear in the appropriate graded pieces.
 
Completely analogous to previous cases, we calculate the rank of $B$ on the $6$ dimensional space $\M{1/2}{\rp{2}{\;}}[3]$ and get that it is $3$. Comparing dimensions of $\M{1/2}{\rp{2}{\;}}[7]$ and $\M{1/2}{\rp{3}{-}}[7]$ (they are $14$ and $15$), we get 
$$\lb{1/2}{\rp{2}{\;}}=\mb{1/2}{\rp{2}{\;}} - \mb{1/2}{\rp{3}{-}} + \mb{1/2}{\rp{1}{-}} $$ 
$$\lb{1/2}{\rp{3}{-}}=\mb{1/2}{\rp{3}{-}} - \mb{1/2}{\rp{1}{-}} \; .$$ 

% We will use the expression for $\M{1/2}{\rp{2}{\;}}$ to show that $\M{1/2}{\rp{3}{-}}$ does contain $\M{1/2}{\rp{1}{-}}$. From this we will obtain $$\lb{1/2}{\rp{3}{-}}=\mb{1/2}{\rp{3}{-}} - \mb{1/2}{\rp{1}{-}} \; .$$ By inspection of the possible linear combinations of the vectors $e_{1/2}^{1}$ and $e_{1/2}^{2}$ in Table 3, we can then conclude that $\LL{1/2}{\rp{3}{-}}$ is not finite dimensional.

% $\M{1/2}{\rp{2}{\;}}$ can contain a quotient of $\M{1/2}{\rp{3}{+}}$ starting at the second grade, but cannot contain $\M{1/2}{\rp{1}{-}}$. Using MAGMA, we find that the rank of $B_2(\rp{2}{\;})$ is 3 instead of 6. Therefore $\M{1/2}{\rp{2}{\;}}$ contains a quotient of $\M{1/2}{\rp{3}{+}}$. Since $\operatorname{dim}_{6}(\M{1/2}{\rp{2}{\;}})=14$, and $\operatorname{dim}_{4}(\M{1/2}{\rp{3}{-}})=15$, it follows that $\M{1/2}{\rp{3}{-}}$ contains $\M{1/2}{\rp{1}{-}}$. So $\LL{1/2}{\rp{2}{\;}}$ is finite dimensional with $$\lb{1/2}{\rp{2}{\;}}=\mb{1/2}{\rp{2}{\;}} - \mb{1/2}{\rp{3}{-}} + \mb{1/2}{\rp{1}{-}} \; .$$ 

Before considering $\LL{1/2}{\rp{3}{+}}$, let us determine $\LL{1/2}{\rp{1}{+}}$. Using MAGMA, we find that the rank of $B$ on $M_{1/4}(\rp{1}{+})[-1]$ is $2$, while the space is $5$ dimensional. So 
$\M{1/2}{\rp{1}{+}}$ contains a quotient of $\M{1/2}{\rp{3}{+}}$ starting at the $\bf{h}$-space $-1$, and is hence finite dimensional. Since $\operatorname{dim}(\M{1/2}{\rp{1}{+}}[1])=7$, and $\operatorname{dim}(\M{1/2}{\rp{3}{+}}[1])=9$, it follows that $\M{1/2}{\rp{3}{+}}$ is not simple, but contains a set of singular vectors isomorphic to $\rp{2}{\;}$ in $\bf{h}$-space $1$. Using decompositions $S^{2}\h^{*} \otimes \rp{3}{+}=\rp{1}{+} \oplus \rp{2}{\;} \oplus  \rp{3}{+} \oplus  \rp{3}{-}$ and $S^{6}\h^{*} \otimes \rp{1}{+}=\rp{1}{+} \oplus  \rp{3}{+} \oplus  \rp{3}{-}$ and table \ref{nulls}, we obtain $$\lb{1/2}{\rp{1}{+}}=\mb{1/2}{\rp{1}{+}} - \mb{1/2}{\rp{3}{+}} + \mb{1/2}{\rp{2}{\;}}$$ corresponding to $e_{1/2}^{1}$.

 We know that $$[\LL{1/2}{\rp{3}{+}}]=[\M{1/2}{\rp{3}{+}}]-[L_{1/2} (\bf{2} ) ]\pm \textrm{modules with lowest weights $>1$}.$$ 
Using MAGMA, we find that $B$ on $\M{1/2}{\rp{3}{+}}[3]$ has rank $9$ and conclude $$\lb{1/2}{\rp{3}{+}}=\mb{1/2}{\rp{3}{+}}-\mb{1/2}{\rp{2}{\;}}+X \cdot \mb{1/2}{\rp{1}{-}}$$ for some integer $X$.   

 We will determine $X$ using induction functors. Consider a point $a \neq 0$ on a reflection hyperplane. We know that the isotropy group of $a$ is isomorphic to $\mz/2\mz$. Let $\epsilon_+,\epsilon_- \in \operatorname{Irred}(\mz/2\mz)$ be the trivial and sign representations. It is easily seen that the irreducible representations of $H_{1/2}(\mz/2\mz,\epsilon_-)$ have Grothendieck group expressions $$\lb{1/2}{\epsilon_-}=\mb{1/2}{\epsilon_-}$$ $$\lb{1/2}{\epsilon_+}=\mb{1/2}{\epsilon_+}-\mb{1/2}{\epsilon_-} .$$ We now use proposition \ref{ind} to deduce $$[\operatorname{Ind}_{a}(\LL{1/2}{\epsilon_+})]=\mb{1/2}{\rp{1}{+}}-\mb{1/2}{\rp{1}{-}}+\mb{1/2}{\rp{3}{+}}-\mb{1/2}{\rp{3}{-}} \; .$$ Using the expressions already determined, this can be rewritten as $$[\operatorname{Ind}_{a}(\LL{1/2}{\epsilon_-})]=\lb{1/2}{\rp{1}{+}}+\lb{1/2}{\rp{2}{\;}}+2 \cdot \lb{1/2}{\rp{3}{+}} - (2X+2) \cdot \lb{1/2}{\rp{1}{-}} \; .$$ Therefore $-(2X+2)\ge 0$ and $X \leq -1$. 

Looking at the multiplicity of $\rp{1}{-}$ in $S^{8}\h^{*} \otimes \rp{3}{+}$ (it is $1$), in $S^{6}\h^{*} \otimes \rp{2}{\;}$ (it is $0$), it follows that $\rp{1}{-}$ appears $1-0+X\ge 0$ times in $\LL{1/2}{\rp{3}{+}}[7]$. This implies $X=-1$. So  $$\lb{1/2}{\rp{3}{+}}=\mb{1/2}{\rp{3}{+}}-\mb{1/2}{\rp{2}{\;}} - \mb{1/2}{\rp{1}{-}}.$$

Looking at the characters of all $L_{1/2}(\tau)$, we find they all have nontrivial $W$-invariants. Hence, shift functors $\Phi_{1/2,3/2}$, $\Phi_{3/2,5/2}$, etc. are all equivalences and we can use these descriptions of category $\mo_{1/2}$ to deduce descriptions of $\mo_{r/2}$ for all positive $r$.

\section*{Appendix: Computational data used}
\label{app}

\begin{table}[!h]
\begin{center}
%\scalebox{.87}{
$\begin{pmatrix}
1 &  1 &  2 &  2 &  2 &  3 &  3 &  4 \\
1 &  1 &  2 &  2 &  2 &  3 &  3 &  4  \\
-y^{-144} &  -y^{144} &  -2 &  2 &  2 &  -3y^{-48} &  -3y^{48} &  4 \\
-y^{-144} &  -y^{144} &  -2 &  2 &  2 &  -3y^{-48} &  -3y^{48} &  4 \\
-y^{-144} &  y^{144} &  0 &  0 &  0  & -y^{-48} &  y^{48} &  0 \\
1  & -1 &  0 &  0 &  0 &  1 &  -1 &  0 \\
(\xi^{4} - 1)y^{-96}  & (\xi^{4} - 1)y^{96} & -\xi^{4} + 1 &  -\xi^{4} + 1 &   -\xi^{4} + 1 &  0 &  0 &  \xi^{4} - 1 \\
-\xi^{4}y^{-192} &  -\xi^{4}y^{192} &  \xi^{4} &  \xi^{4} &  \xi^{4} &  0 &  0 &  -\xi^{4} \\
-\xi^{6}y^{-216} &  -\xi^{6}y^{216} &  -2\xi^{6} &  0 &  0 &  \xi^{6}y^{-72}  & \xi^{6}y^{72} &  0 \\
\xi^{6}y^{-72} &  \xi^{6}y^{72} &  2\xi^{6} &  0  & 0 &  -\xi^{6}y^{-24} &  -\xi^{6}y^{24} &  0 \\
\xi^{4}y^{-48} &  \xi^{4}y^{48} &  -\xi^{4} &  \xi^{4} &  \xi^{4} &  0 &  0 &  -\xi^{4} \\
(-\xi^{4} + 1)y^{-240} &  (-\xi^{4} + 1)y^{240}  & \xi^{4} - 1  & -\xi^{4} + 1 &  -\xi^{4} + 1 &  0  & 0  & \xi^{4} - 1 \\
(-\xi^{5} + \xi) y^{-252} &  (\xi^{5} - \xi)y^{252} &  0 &  -\xi^{6} - 1 &  \xi^{6} + 1 &  (\xi^{5} - \xi)y^{-84} &  (-\xi^{5} + \xi)y^84  & 0 \\
-\xi^3y^{-180}  & \xi^{3}y^{180} &  0 &  \xi^{6} - 1 &  -\xi^{6} + 1 &  \xi^{3}y^{-60}  & -\xi^{3}y^{60} &  0 \\
(\xi^{5} - \xi)y^{-108} &  (-\xi^{5} + \xi)y^{108} &  0 &  -\xi^{6} - 1 &  \xi^{6} + 1 &  (-\xi^{5} + \xi)y^{-36} &  (\xi^{5} - \xi)y^{36} &  0  \\
\xi^{3}y^{-36}  & -\xi^{3}y^{36} &  0  & \xi^{6} - 1 &  -\xi^{6} + 1  & -\xi^{3}y^{-12}  & \xi^{3}y^{12} &  0 
\end{pmatrix}$ %}
\end{center}
\caption{$A$-matrix for parameter $c$ where $\xi =e^{2 \pi i/24}$ and $y=\xi^{c}$. The columns are indexed in the order $\rp{1}{+}$, $\rp{1}{-}$, $\rp{2}{\;}$, $\rp{2}{+}$, $\rp{2}{-}$, $\rp{3}{+}$, $\rp{3}{-}$, $\rp{4}{\;}$.}\label{matrixA}
\end{table}

\begin{table}[!h]
\begin{center}
%\scalebox{.8}{
\begin{tabular}{| p{1.5cm} || p{10cm} |}
\hline 
$c=1/12$ &  $ e_{1/12}=(1,1,0,0,-1,0,0,0)  $ \\ 
\smallskip $c=1/4$ & \smallskip $ e_{1/4}^{1}=(1,0,0,0,0,0,1,-1) $, $ e_{1/4}^{2}=(0,1,0,0,0,1,0,-1) $, $ e_{1/4}^{3}=(0,0,0,1,0,-1,-1,1) $ \\ 
\smallskip $c=1/3$ & \smallskip $ e_{1/3}=(1,1,-1,0,0,0,0,0) $ \\ 
\smallskip $c=1/2$ & \smallskip $ e_{1/2}^{1}=(1,0,1,0,0,-1,0,0)$, $ e_{1/2}^{2}=(0,1,1,0,0,0,-1,0)$ \\
\hline 
\end{tabular} %}
\end{center}
\caption{Nullspace bases for $A$-matrix at $c=1/12,1/4,1/3,1/2$} \label{nulls}
\end{table}

\begin{table}[!h] 
\begin{center}
%\scalebox{.8}{
\begin{tabular}{| p{1cm} || p{14cm} |}
\hline 
$s_{\rp{1}{+}}$ & $v^{-24}(v - \xi)(v + \xi)(v - \xi^3)^2(v + \xi^3)^2(v - \xi^4)(v + \xi^4)(v - \xi^5)(v + \xi^5)(v - \xi^6)^2(v + \xi^6)^2(v - \xi^7)(v + \xi^7)(v - \xi^8)(v + \xi^8)(v - \xi^9 )^2(v + \xi^9)^2(v - \xi^{11})(v + \xi^{11})$ \\ 

\smallskip $s_{\rp{1}{-}}$ & \smallskip $(v - \xi)(v + \xi)(v - \xi^3)^2(v + \xi^3)^2(v - \xi^4)(v + \xi^4)(v - \xi^5)(v + \xi^5)(v - \xi^6)^2(v + \xi^6)^2(v - \xi^7)(v + \xi^7)(v - \xi^8)(v + \xi^8)(v - \xi^9 )^2(v + \xi^9)^2(v - \xi^{11})(v + \xi^{11})$ \\ 

\smallskip $s_{\rp{2}{\;}}$ & \smallskip $2v^{-4}(v - \xi^4)(v + \xi^4)(v - \xi^6)^2(v + \xi^6)^2(v - \xi^8)(v + \xi^8) $  \\ 

\smallskip $s_{\rp{2}{+}}$ & \smallskip  $-12v^{-4}(v - \xi)(v + \xi^3)^2(v + \xi^5)(v + \xi^7)(v + \xi^9)^2(v - \xi^{11}) $\\ 

\smallskip $s_{\rp{2}{-}}$ & \smallskip $-12v^{-4}(v + \xi)(v - \xi^3)^3(v - \xi^5)(v - \xi^7)(v - \xi^9)^2(v + \xi^{11}) $ \\ 

\smallskip $s_{\rp{3}{+}}$ & \smallskip $v^{-10}(v - \xi^3)^2(v + \xi^3)^2(v - \xi^6)^2(v + \xi^6)^2(v - \xi^9)^2(v + \xi^9)^2 $ \\ 

\smallskip $s_{\rp{3}{-}}$ & \smallskip $v^{-2}(v - \xi^3)^2(v + \xi^3)^2(v - \xi^6)^2(v + \xi^6)^2(v - \xi^9)^2(v + \xi^9)^2 $ \\ 

\smallskip $s_{\rp{4}{\;}}$ & \smallskip $3v^{-4}(v - \xi^3)^2(v + \xi^3)^2(v - \xi^9)^2(v + \xi^9)^2 $ \\ 
\hline 
\end{tabular} %}
\end{center}
\caption{Schur Elements for $\mathcal{H}_{q}(\gtw)$, $q=v^2$, $\xi=e^{2 \pi i /24}$}\label{Schur}
\end{table}

\pagebreak


\begin{thebibliography}{99}
\begin{normalsize}

%%[1] J. van der Geer, J.A.J. Hanraads, R.A. Lupton, The art of writing a scientific article, J. Sci. Commun. 163 (2000) 51Ð59. 

 \bibitem[BC]{BC} Y. Berest, O. Chalykh, \It{Quasi-Invariants of Complex Reflection Groups}, preprint arXiv:0912.4518v1

% [BCP] Bosma,W, Cannon J, and Playoust, C, \It{The Magma Algebra System. I. The user language}, J. Symb. Comput. Vol. 24, No. 3-4, (1997) 235-265. \newline http://magma.maths.usyd.edu.au/magma/
 
 \bibitem[BCP]{Magma}
W. Bosma, J. Cannon, C. Playoust. \textit{The Magma algebra system. I. The user language.} J. Symb. Comput, Vol. 24, No. 3-4 (1997), 235-265. \newline http://magma.maths.usyd.edu.au/magma/
 
 \bibitem[BE]{BE} R. Bezrukavnikov, P. Etingof, \It{Parabolic induction and restriction functors for rational Cherednik algebras}, Sel. Math, Vol. 14, No. 3-4 (2009), 397-425. 
 
 \bibitem[BEG]{BEG} Y. Berest, P. Etingof, V. Ginzburg, \It{Finite dimensional representations of rational Cherednik algebras}, Int. Math. Res. Not, 19 (2003), 1053-1088. 
 
 \bibitem[BMR]{BMR} M. Broue, G. Malle, R. Rouquier, \It{Complex reflection groups, braid groups, Hecke algebras}, J. Reine und Angew. Math, 500 (1998), 127-190. 

\bibitem[C]{C} M. Chlouveraki, \It{On the cyclotomic Hecke algebras of complex reflection groups}, Ph.D. thesis arXiv:0710.0776v1.  

%\bibitem[CH]{CH} T. Chmutova, \textit{Representations of the rational Cherednik algebras of dihedral type}. J. of Algebra, 297 (2006), 542-565.

\bibitem[EM]{EM} P. Etingof, X. Ma, \It{Lecture notes on Cherednik algebras}, arXiv:1001.0432v4. 

\bibitem[ES]{ES} P. Etingof, E. Stoica, \It{Unitary representations of rational Cherednik algebras}, Rep. Theory, 13 (2009), 349-370.  

\bibitem[GG]{GG} I. Gordon and S. Griffeth, \It{Catalan numbers for complex reflection groups}, preprint arXiv:0912.1578v1. 

 \bibitem[GGOR]{GGOR} V. Ginzburg, N. Guay, E. Opdam, and R. Rouquier, \It{On the category $\mo$ for rational Cherednik algebras}, Invent. Math, Vol. 154, No. 3 (2003) 617-651. 

 \bibitem[GP]{GP} M. Geck, and G. Pfeiffer, \It{Characters of Finite Coxeter groups and Iwahori-Hecke algebras}, Oxford University Press, 2000. 
  
\bibitem[M]{M} J. Michel, \It{GAP Manual}, http://www.math.jussieu.fr/~jmichel/ .

\bibitem[R]{R} R. Rouquier, \It{q-Schur algebras and complex reflection groups}, I, Moscow Math. J, Vol. 8, No. 1 (2008), 119-158.

\bibitem[S]{S} Y. Sun, \It{Finite dimensional representations of the rational Cherednik algebra for $G_4$}, J. of Algebra, 323 (2010), 2864-2887.

\bibitem[ST]{ST} G. Shephard and J. Todd, \It{Finite unitary reflection groups}, Canad. J. Math, 6 (1954), 274-304. 

\end{normalsize}
\end{thebibliography}
\end{document}